\numberwithin{equation}{section}
\def\frk{\frak}               
\def\Phi{{\frk n}}
\def\Phi{{\frk N}}
\def\opn#1#2{\def#1{\operatorname{#2}}} 
\opn\chara{char} \opn\length{\ell} \opn\pd{pd} \opn\rk{rk}
\opn\projdim{proj\,dim} \opn\injdim{inj\,dim} \opn\rank{rank}
\opn\depth{depth} \opn\grade{grade} \opn\height{height}
\opn\embdim{emb\,dim} \opn\codim{codim}
\opn\Tr{Tr} \opn\bigrank{big\,rank}
\opn\superheight{superheight}\opn\lcm{lcm}
\opn\trdeg{tr\,deg}
\opn\reg{reg} \opn\lreg{lreg} \opn\ini{in} \opn\lpd{lpd}
\opn\size{size}\opn\bigsize{bigsize}
\opn\cosize{cosize}\opn\bigcosize{bigcosize}
\opn\sdepth{sdepth}\opn\sreg{sreg}
\opn\link{link}\opn\fdepth{fdepth}
\opn\index{index}
\opn\index{index}
\opn\indeg{indeg}
\opn\N{N}
\opn\SSC{SSC}
\opn\SC{SC}
\opn\lk{lk}
\opn\div{div} \opn\Div{Div} \opn\cl{cl} \opn\Cl{Cl}
\opn\Spec{Spec} \opn\Supp{Supp} \opn\supp{supp} \opn\Sing{Sing}
\opn\Ass{Ass} \opn\Min{Min}\opn\Mon{Mon} \opn\dstab{dstab} \opn\astab{astab}
\opn\Syz{Syz}
\opn\reg{reg}
\opn\Ann{Ann} \opn\Rad{Rad} \opn\Soc{Soc}
\opn\Im{Im} \opn\Ker{Ker} \opn\Coker{Coker} \opn\Am{Am}
\opn\Hom{Hom} \opn\Tor{Tor} \opn\Ext{Ext} \opn\End{End}\opn\Der{Der}
\opn\Aut{Aut} \opn\id{id}
\opn\nat{nat}
\opn\pff{pf}
\opn\Pf{Pf} \opn\GL{GL} \opn\SL{SL} \opn\mod{mod} \opn\ord{ord}
\opn\Gin{Gin} \opn\Hilb{Hilb}\opn\sort{sort}
\opn\initial{init}
\opn\ende{end}
\opn\height{height}
\opn\type{type}
\opn\aff{aff} \opn\con{conv} \opn\relint{relint} \opn\st{st}
\opn\lk{lk} \opn\cn{cn} \opn\core{core} \opn\vol{vol}
\opn\link{link} \opn\Link{Link}\opn\lex{lex}
\opn\gr{gr}
\def\pot#1#2{#1[\kern-0.28ex[#2]\kern-0.28ex]}
\opn\dirlim{\underrightarrow{\lim}}
\opn\inivlim{\underleftarrow{\lim}}
\def\Implies{\ifmmode\Longrightarrow \else
        \unskip${}\Longrightarrow{}$\ignorespaces\fi}
\def\implies{\ifmmode\Rightarrow \else
        \unskip${}\Rightarrow{}$\ignorespaces\fi}
\def\iff{\ifmmode\Longleftrightarrow \else
        \unskip${}\Longleftrightarrow{}$\ignorespaces\fi}
\newtheorem{Theorem}{Theorem}[section]
 \newtheorem{Lemma}[Theorem]{Lemma}
 \newtheorem{Corollary}[Theorem]{Corollary}
 \newtheorem{Proposition}[Theorem]{Proposition}
 \newtheorem{Remark}[Theorem]{Remark}
 \newtheorem{Example}[Theorem]{Example}
 \newtheorem{Definition}[Theorem]{Definition}
\newtheorem{Notation}[Theorem]{Notation}
\let\epsilon\varepsilon
\let\kappa=\varkappa
\def\qed{\ifhmode\textqed\fi
      \ifmmode\ifinner\quad\qedsymbol\else\dispqed\fi\fi}
\def\textqed{\unskip\nobreak\penalty50
       \hskip2em\hbox{}\nobreak\hfil\qedsymbol
       \parfillskip=0pt \finalhyphendemerits=0}
\def\dispqed{\rlap{\qquad\qedsymbol}}
\opn\dis{dis}
\def\pnt{{\raise0.5mm\hbox{\large\bf.}}}
\opn\Lex{Lex}
\begin{document}

\title{The $a_0$-invariants of  powers of a two-dimensional squarefree monomial ideal}

\author{Lizhong Chu,  Dancheng Lu$^*$}

\thanks{* Corresponding author. }

\address{ School  of Mathematical Sciences, Soochow University, 215006 Suzhou, P.R.China}
\email{chulizhong@suda.edu.cn, ludancheng@suda.edu.cn}

\keywords{$a_0$-invariant, one-dimensional simplicial complexes, Local cohomology, basic graph, cycle, clique }

\subjclass[2010]{Primary 13D45, 13D05; Secondary 13D02.}

\begin{abstract} Let $\Delta$ be an one-dimensional simplicial complex on $\{1,2,\ldots,s\}$ and $S$ the polynomial ring $K[x_1,\ldots,x_s]$ over a field $K$.  The explicit formula for $a_0(S/I_{\Delta}^n)$ is presented when $\mathrm{girth}(\Delta)\geq 4$. If $\mathrm{girth}(\Delta)=3$ we characterize the simplicial complexes $\Delta$ for which  $a_0(S/I_{\Delta}^n)=3n-1$ or $3n-2$.
\end{abstract}
\maketitle
\section*{Introduction}

Throughout  this paper except  subsection 1.2, we always assume that   $\Delta$ is   an one-dimensional simplicial complex on $[s]$ without isolated vertices, where $s$ is a positive integer and $[s]:=\{1,2,\ldots,s\}$.  Let $S:=K[x_1,\ldots,x_s]$ be the polynomial ring over a field $K$. The Stanley-Reisner ideal $I_{\Delta}$ of $\Delta$ is a pure two-dimensional squarefree  monomial ideal and vice versa. Here an ideal is {\it pure} if  all its minimal ideals have the same height. The algebraic properties of (symbolic) powers of the Stanley-Reisner ideal $I_{\Delta}$ of $\Delta$ were studied in \cite{HT, L,MV}. It was in \cite{HT} that the regularity of $S/I_{\Delta}^{(n)}$ is computed and in \cite{L} the geometric regularity of $S/I_{\Delta}^{n}$ was obtained and   the following formula was given:
$$\mbox{g-reg}(S/I_{\Delta}^n)=\mbox{reg}(S/I_{\Delta}^{(n)}).$$  Recently Minh and Vu in \cite{MV} extend the formula above a big step by proving the following theorem.

\noindent {\bf  Theorem} {\em Let $I$ be an intermediate monomial ideal between $I_{\Delta}^n$ and $I_{\Delta}^{(n)}$, namely $I=I_{\Delta}^n+(f_1,\ldots,f_t)$, where $f_i$ are among minimal generators of $I_{\Delta}^{(n)}$.  Then}  $$\mbox{reg}(S/I)= \mbox{g-reg}(S/I_{\Delta}^n)=\mbox{reg}(S/I_{\Delta}^{(n)}).$$

 All these results depend heavily  on the computing of $a_i$-invariants of the corresponding monomial ideals by Takayama's Lemma. Unlike  the regularity, the $a_i$-invariants of $S/I_{\Delta}^n$ and $S/I_{\Delta}^{(n)}$ may be different. For example, it was shown  in  \cite{L} that   $a_2(S/I_{\Delta}^n)$ is always equivalent to  $a_2(S/I_{\Delta}^{(n)})$, however  $a_1(S/I_{\Delta}^n)$ may be different from $a_1(S/I_{\Delta}^{(n)})$. Regarding  $a_0$-invariants, we know well that $a_0(S/I_{\Delta}^{(n)})$ always vanishes, but the computing of $a_0(S/I_{\Delta}^{n})$ seems very difficult. This is why the geometric regularity   instead of the regularity of $S/I_{\Delta}^n$  was obtained in \cite{L}.  In \cite{MV}, although the authors presented the regularity of $S/I_{\Delta}^n$, they didn't give
 $a_0(S/I_{\Delta}^n)$ explicitly. In fact, they obtained the lower bound of $\mbox{reg}(S/I_{\Delta}^n)$ in other ways, say, by the known fact $\mathrm{reg}(M)\geq d(M)$, where $M$ is any finitely generated graded $S$-module and $d(M)$ is the largest degree of minimal generators of $M$.

 In this paper, we devote to  evaluating $a_0(S/I_{\Delta}^n)$. Note that since $\Delta$ is an  one-dimensional simplical complex, we may look it  upon as a simple graph. If $\mbox{girth}(\Delta)\geq 4$, we first show that $a_0(S/I_{\Delta}^n)\in \{-\infty, 2n-1,2n\}$ and next determine when $a_0(S/I_{\Delta}^n)\neq -\infty$. Finally, it is shown that if $a_0(S/I_{\Delta}^n)\neq -\infty$, then $a_0(S/I_{\Delta}^n)=2n-1$ if and only if $G$, the complementary graph of $\Delta$, contains no two odd cycles which are adjacent. For the case that  $\mbox{girth}(\Delta)=3$ we don't obtain a complete answer. We show that if  $\mbox{girth}(\Delta)=3$, then $a_0(S/I_{\Delta}^n)\leq 3n-1$  and then characterize the simplicial complexes $\Delta$ such that $a_0(S/I_{\Delta}^n)=3n-1$ or $a_0(S/I_{\Delta}^n)=3n-2$.

It is worth  pointing out that  all  characterization  results stated above have the same structure in essence.  More precisely,  they may be written as:

\vspace{2mm}

\noindent {\bf Proposition} {\it For $n\geq 2$, $a_0(S/I_{\Delta}^n)=f(n)$ if and only if $\Delta$ contains an induced subgraph which is isomorphic to a graph in $A_{f(n)}$ and contains no induced subgraphs isomorphic to a graph in $B_{f(n)}$.}
 \vspace{2mm}

Here, $f(n)\in \{2n-1, 2n, 3n-1, 3n-2\}$, and $A_{f(n)}, B_{f(n)}$ are classes of graphs. If $f(n)\in \{2n-1, 2n\}$, the proposition needs the assumption that $\mbox{girth}(\Delta)\geq 4$.  For examples:

(1) If $f(n)=3n-1$ then $A_{f(n)}=\{\mathbb{K}_4 \}$ and $B_{f(n)}$ is the empty set; Here $\mathbb{K}_i$ means a complete graph with  $i$ vertices.

(2) If $f(n)=3n-2$ then $A_{f(n)}=\{\mathbb{G}'_1, \mathbb{F}_2, \ldots, \mathbb{F}_{n-1} \}$ and $B_{f(n)}=A_{3n-1}$. Here the definitions of  $\mathbb{G}'_1, \mathbb{F}_k$ are given in Theorem~\ref{3n-2} and Notation~\ref{F_k}, respectively.

(3) If $f(n)=2n$  then $A_{f(n)}=\{ \overline{\mathbb{G}_3},\overline{\mathbb{G}_4}, \overline{\mathbb{G}_5 }\}$ and $B_{f(n)}=\{\mathbb{K}_3\}$; Here $\overline{G}$ denotes the  complementary graph of  a graph $G$.  The  graphs $\mathbb{G}_3, \mathbb{G}_4, \mathbb{G}_5$  are defined in Example~\ref{G3G4G5}.

(4) If $f(n)=2n-1$ then $A_{f(n)}=\emptyset$ and $B_{f(n)}=A_{2n}\cup B_{2n}$.
\vspace{2mm}

We may call a graph in $A_{f(n)}$ to be a {\em basic graph} for $f(n)$.  By Proposition~\ref{3n-3},
    both $\mathbb{G}_6$ and  $\mathbb{G}_7$  are basic graphs for  $3n-3$. However, it seems difficult  to obtain a complete list of basic graphs for $3n-3$.

\section{Preliminaries}

In this section, we fix notation and recall some concepts and results which will be used in this paper. For all undefined notation one may refer to \cite{HH}.

 \subsection{Graphs and edge ideals}
Let $G$ be a simple graph on vertex set $V(G)=[s]$ and with edge set $E(G)$.  For a vertex $i\in [s]$, the neighborhood of $i$ is the subset $\{j\in [s]\:\ \{i,j\}\in E(\Delta)\}$, which is denoted by $N_G(i)$ or simply $N(i)$.

 An {\it independent set} of $G$ is a subset $S\subseteq [s]$ such that $\{i,j\}\notin E(G)$ for all $i,j\in S$.  The {\it independent number} of $G$, denoted by $\alpha(G)$,  is the maximum size of independent sets of $G$.

 Given a subset $W$ of $[s]$ the induced subgraph of $G$ on $W$ is the subgraph $G_W$ on $W$ consisting of edges $\{i,j\}\in E(G)$ with $\{i,j\}\subseteq W$.

 A {\it walk} of $G$ of length $q$ between vertices $i$ and $j$, is a sequence of edges: $$\{i_0,i_1\},\{i_1,i_2\},\ldots, \{i_{q-1},i_q\},$$  with $i_0=i, i_q=j$,   which is often written as:  $i=i_0-i_1-i_2-\cdots-i_q=j$.  The walk is {\it closed} if $i=j$. A cycle $C$ of $G$ is a closed walk
$i_0-i_1-i_2-\cdots-i_q=i_0$ such that $i_{0},i_1,\ldots,i_{q-1}$ are pairwise distinct. The length of a cycle $C$ is denoted by $\ell(C)$. We may regard  the cycle $C$ as  a subgraph of $G$ with $V(C)=\{i_0,\ldots,i_{q-1}\}$ and $E(C)=\{\{i_k,i_{k+1}\}\:|k=1,2,\ldots, q-1 \}$. Thus an {\it induced cycle} is a cycle which is also an induced subgraph. The {\it girth} of $G$, denoted by $\mbox{girth}(G)$, is the smallest length of induced cycles. By convention $\mbox{girth}(G)=\infty$ if $G$ contains no cycles.

A {\it clique} of $G$ is a complete subgraph of $G$. The clique number  of  simple graph $G$, denoted by $\mathrm{clique}(G)$, is the largest   number of vertices of cliques of $G$.

The edge ideal of $G$ is defined to be the monomial ideal $$I(G)=(x_ix_j\:\ \{i,j\}\in E(G))$$ of $S$. By abuse of notation, we often identify  $i$ with $x_i$ for $i\in [s]$ and call $x_ix_j\in I(G)$ an edge of $G$.
\subsection{Stanley-Reisner ideal of a simplicial complex}
A simplicial complex $\Delta$ on $[s]$ is a collection of subsets of $[s]$ such that if $F_1\subseteq F_2\subseteq [s]$ and $F_2\in \Delta$, then $F_1\in \Delta$. The Stanley-Reisner ideal $I_{\Delta}$ is defined to the ideal $(x_{F}\:\ F\notin \Delta)$, where $x_F=\prod_{i\in F}x_i$.

An  one-dimensional simplicial complex is just a simple graph.  For convenience, if  $\Delta$ is an one-dimensional simplicial complex on $[s]$, we always assume further that $\Delta$ contains no isolated vertices, i.e., any vertex  of $\Delta$ belongs to an edge. In this case, $I_{\Delta}$ is a pure two-dimensional monomial ideal.

\subsection{$a_i$-invariants and $0$-th critical monomials} Let $\mathfrak{m}=(x_1,\ldots,x_s)$ be the maximal homogeneous ideal of $S$ and $M$ a finitely generated graded $S$-module.  For a vector $\mathbf{a}=(a_1,\ldots,a_s)\in \mathbb{Z}^s$, $|\mathbf{a}|$ denotes the number $a_1+\cdots+a_s$. The $a_i$-invariants of $M$ is defined to be $$a_i(M)= \left\{
                                                                                        \begin{array}{ll}
                                                                                         \max\{|\mathbf{a}|\:\ \mathbf{a}\in \mathbb{Z}^s, H_{\mathfrak{m}}^i(M)_{\mathbf{a}}\neq 0 \} , & \hbox{if $H_{\mathfrak{m}}^i(M)\neq 0 $;} \\
                                                                                          -\infty, & \hbox{otherwise.}
                                                                                        \end{array}
                                                                                      \right.
$$
Here, $ H_{\mathfrak{m}}^i(M)$ denotes the $i$-th local cohomology of $M$ with respect to $\mathfrak{m}$.
 For a monomial ideal $I$, Takayama found a formula for $\dim_K(H_{\mathfrak{m}}^i(S/I)_{\mathbf{a}})$ for all $\mathbf{a}\in \mathbb{Z}^{s}$ in terms of certain simplicial complexes which are called degree complexes and denoted by $\Delta_{\mathbf{a}}(I)$. By the fact $\Delta_{\mathbf{a}}(I)=\mathrm{Link}_{\Delta_{\mathbf{a_+}}(I)}(G_{\mathbf{a}})$ for any $\mathbf{a}\in \mathbb{Z}^s$, see e.g. \cite[Theorem 1.6]{MT}, Takayama's lemma can rewritten as follows.

\begin{Lemma}   Let $I$ be a  monomial ideal in the polynomial ring $S$. Then  \begin{equation*}\begin{split}
a_i(S/I)=\max\{|\mathbf{a}|-|F|\:\ \mathbf{a}\in \mathbb{N}^s ,H_{\mathfrak{m}}^i(S/I)_{\mathbf{a}}= \widetilde{H}_{i-|F|-1}(\mathrm{Link}_{\Delta_{\mathbf{a}}(I)}F; K)\neq 0, \\ \mbox{ where }  F\in \Delta(I) \mbox{ with } F\cap \mathrm{supp}(\mathbf{a})=\emptyset   \}.\end{split}\end{equation*}
\end{Lemma}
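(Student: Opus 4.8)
The plan is to start from the very definition of the $a_i$-invariant and reorganise the nonvanishing graded pieces of local cohomology, passing from an arbitrary exponent vector in $\mathbb{Z}^s$ to a nonnegative vector together with a disjoint face. By definition, when $H_{\mathfrak{m}}^i(S/I)\neq 0$ we have $a_i(S/I)=\max\{|\mathbf{c}|\:\ \mathbf{c}\in\mathbb{Z}^s,\ H_{\mathfrak{m}}^i(S/I)_{\mathbf{c}}\neq 0\}$, so it suffices to rewrite this maximum. First I would attach to each $\mathbf{c}\in\mathbb{Z}^s$ its negative support $G_{\mathbf{c}}=\{i\in[s]\:\ c_i<0\}$ and its nonnegative part $\mathbf{a}:=\mathbf{c}_{+}\in\mathbb{N}^s$; these automatically satisfy $\supp(\mathbf{a})\cap G_{\mathbf{c}}=\emptyset$. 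Takayama's formula computes $\dim_K H_{\mathfrak{m}}^i(S/I)_{\mathbf{c}}$ as $\dim_K\widetilde{H}_{i-|G_{\mathbf{c}}|-1}(\Delta_{\mathbf{c}}(I);K)$, and the identity $\Delta_{\mathbf{c}}(I)=\mathrm{Link}_{\Delta_{\mathbf{a}}(I)}(G_{\mathbf{c}})$ recalled just before the lemma turns this into $\widetilde{H}_{i-|F|-1}(\mathrm{Link}_{\Delta_{\mathbf{a}}(I)}F;K)$ after setting $F:=G_{\mathbf{c}}$ and using $|G_{\mathbf{c}}|=|F|$. This is exactly the homological term appearing in the claimed formula.

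The crux of the proof, and the step I expect to be the main obstacle, is to check that this homology group depends only on the pair $(\mathbf{a},F)$ and not on the particular negative entries of $\mathbf{c}$. I would derive this from the monomial membership criterion for localisations: for a set $W\subseteq[s]$ one has $\mathbf{x}^{\mathbf{c}}\in I\,S_{W}$, where $S_{W}$ inverts the variables indexed by $W$, if and only if some minimal generator $\mathbf{x}^{\mathbf{d}}$ of $I$ satisfies $c_i\geq d_i$ for all $i\notin W$. Since the faces of $\Delta_{\mathbf{c}}(I)$ are governed by such membership with $W$ always containing $G_{\mathbf{c}}$, the coordinates of $\mathbf{c}$ on $G_{\mathbf{c}}$ enter only through the set $G_{\mathbf{c}}$ itself; consequently $\Delta_{\mathbf{c}}(I)$, and hence $H_{\mathfrak{m}}^i(S/I)_{\mathbf{c}}$, is unchanged if every negative coordinate of $\mathbf{c}$ is replaced by $-1$. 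The same criterion, applied with $\mathbf{a}\in\mathbb{N}^s$, also yields the inclusion $\Delta_{\mathbf{a}}(I)\subseteq\Delta(I)$, so that a nonvoid link $\mathrm{Link}_{\Delta_{\mathbf{a}}(I)}F$ forces $F\in\Delta(I)$; this is precisely the constraint imposed in the statement, and it therefore costs nothing to add it.

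It then remains to carry out the maximisation. Among all $\mathbf{c}\in\mathbb{Z}^s$ sharing a fixed nonnegative part $\mathbf{a}$ and a fixed negative support $F$, the homological term is constant by the previous paragraph, while $|\mathbf{c}|=|\mathbf{a}|+\sum_{i\in F}c_i$ is largest exactly when $c_i=-1$ for every $i\in F$, giving $|\mathbf{c}|=|\mathbf{a}|-|F|$. Thus every exponent vector contributing a nonzero summand to the original maximum can be traded, without decreasing $|\mathbf{c}|$, for a pair $(\mathbf{a},F)$ with $\mathbf{a}\in\mathbb{N}^s$, $F\in\Delta(I)$ and $F\cap\supp(\mathbf{a})=\emptyset$, and conversely each such pair arises from a genuine vector $\mathbf{c}$. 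Taking the maximum over these pairs of $|\mathbf{a}|-|F|$, subject to $H_{\mathfrak{m}}^i(S/I)_{\mathbf{c}}=\widetilde{H}_{i-|F|-1}(\mathrm{Link}_{\Delta_{\mathbf{a}}(I)}F;K)\neq 0$, reproduces $a_i(S/I)$ and yields the asserted equality, with the convention that the maximum of the empty set is $-\infty$ recovering the vanishing case.
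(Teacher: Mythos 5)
Your argument is correct and is exactly the derivation the paper intends: it states this lemma without a separate proof, obtaining it by combining Takayama's formula with the identity $\Delta_{\mathbf{c}}(I)=\mathrm{Link}_{\Delta_{\mathbf{c}_+}(I)}(G_{\mathbf{c}})$ from \cite[Theorem 1.6]{MT}, which is precisely your route. Your additional verifications --- that the degree complex depends only on $(\mathbf{a},F)$, that $|\mathbf{c}|$ is maximized by setting the negative entries to $-1$, and that nonvanishing homology of the link forces $F\in\Delta(I)$ --- correctly fill in the details the paper leaves implicit.
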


If $\mathbf{a}\in \mathbb{N}^s$, then $I_{\Delta_{\mathbf{a}}(I)}=\sqrt{I:x^{\mathbf{a}}}$, see e.g. \cite[Lemma 2.12]{MV}.

\begin{Lemma}  \label{fun} Let $I$ be a monomial ideal in $S$ and $\mathbf{a}$ a vector in $\mathbb{N}^s$. Then the following statements are equivalent:
\begin{enumerate}
\item $H_{\mathfrak{m}}^0(S/I)_{\mathbf{a}}\neq 0 $;

\item $\Delta_{\mathbf{a}}(I)$ is the empty complex $\{\emptyset\}$;

\item  $\sqrt{I:x^{\mathbf{a}}}=\mathfrak{m}$.
\end{enumerate}
\end{Lemma}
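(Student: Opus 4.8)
The plan is to establish the two equivalences $(1)\Leftrightarrow(2)$ and $(2)\Leftrightarrow(3)$ separately, each resting on a single tool already recorded above. For $(1)\Leftrightarrow(2)$ I would read Takayama's lemma in the form displayed earlier at the level of a single graded piece: for $\mathbf a\in\mathbb N^s$ the only admissible face is $F=\emptyset$, since a face with $|F|\ge 1$ would force the homological index $0-|F|-1\le -2$, where reduced homology vanishes. Because $\Link_{\Delta_{\mathbf a}(I)}\emptyset=\Delta_{\mathbf a}(I)$, this gives the isomorphism
$$\Coh{0}{S/I}_{\mathbf a}\iso \widetilde{H}_{-1}(\Delta_{\mathbf a}(I);K).$$
It then remains to quote the elementary homological fact that $\widetilde{H}_{-1}(\Delta;K)\ne 0$ precisely when $\Delta=\{\emptyset\}$: in the augmented chain complex $C_{-1}=K$ records the empty face, and $\widetilde{H}_{-1}=\Coker(C_0\to C_{-1})$ is nonzero exactly when $\Delta$ is nonvoid yet has no vertices, i.e.\ when $\Delta=\{\emptyset\}$. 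Chaining these two facts yields $(1)\Leftrightarrow(2)$.

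For $(2)\Leftrightarrow(3)$ I would invoke the identity $I_{\Delta_{\mathbf a}(I)}=\sqrt{I:x^{\mathbf a}}$, valid for $\mathbf a\in\mathbb N^s$ and noted immediately before the statement, together with the Stanley--Reisner dictionary. Concretely, a complex equals $\{\emptyset\}$ if and only if no singleton $\{i\}$ is a face, which happens if and only if every $x_i$ belongs to its Stanley--Reisner ideal, that is, if and only if that ideal equals $\mm$. Applying this to $\Delta_{\mathbf a}(I)$ converts the statement $\Delta_{\mathbf a}(I)=\{\emptyset\}$ into $\sqrt{I:x^{\mathbf a}}=\mm$, which is exactly $(3)$.

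Since the argument merely assembles results from the excerpt, it is short, and the only places demanding care are conventional. The main obstacle, such as it is, will be to phrase the degree $(-1)$ homology computation without conflating the void complex (no faces at all) with the empty complex $\{\emptyset\}$ (whose unique face is $\emptyset$), and to confirm that $F=\emptyset$ is genuinely the sole contributing face at $i=0$ so that no higher reduced-homology terms slip in. Once these conventions are fixed, both equivalences close immediately.
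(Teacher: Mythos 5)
Your proposal is correct and follows essentially the same route as the paper: the paper's proof simply quotes the fact that $\widetilde{H}_{-1}(\Delta;K)\neq 0$ if and only if $\Delta=\{\emptyset\}$ and declares the rest clear, relying implicitly on Takayama's formula (with $F=\emptyset$ the only contributing face at $i=0$) and on the identity $I_{\Delta_{\mathbf{a}}(I)}=\sqrt{I:x^{\mathbf{a}}}$ stated just before the lemma. You have merely made explicit the same two ingredients, including the worthwhile care about distinguishing the void complex from $\{\emptyset\}$, which the paper leaves to the reader.
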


\begin{proof} We know from the theory of simplicial homology (see e.g. \cite[Page 80]{HH}), that $\widetilde{H}_{-1}(\Delta; K)\neq 0$ iff $\Delta=\{\emptyset\}$. Now the result is clear.
\end{proof}

\begin{Definition}\em  A vector $\mathbf{a}\in \mathbb{N}^n$ is called a {\it $0$-th  critical exponent} of $I$  if $\mathbf{a}$ satisfies one of the equivalent conditions in Lemma~\ref{fun}.  A  $0$-th critical exponent $\mathbf{a}$ of $I$ is called  {\it extremal} if
$|\mathbf{a}|=\max\{|\mathbf{b}|: \mathbf{b} \mbox{ is a $0$-th critical exponent of  }  I\}$. We call $\mathbf{u}=x^{\mathbf{a}}$ a {\it $0$-th  critical monomial } (resp.  {\it $0$-th  extremal monomial})  of $I$ if $\mathbf{a}$ is a $0$-th  critical exponent (resp. $0$-th  extremal exponent).

By this definition, if $\mathbf{u}$ is a $0$-th  extremal monomial of $I$, then  $a_0(S/I)=\deg(\mathbf{u})$.

\section{when $\mathrm{girth}(\Delta)\geq 4$}
 In this section, we always assume that $\Delta$ is  an one-dimensional simplicial complex on $[s]$ with $\mathrm{girth}(\Delta)\geq 4$. Looking upon $\Delta$ as a simple graph and let $G$ be the complementary graph of $\Delta$. Then $\alpha(G)=2$ and  it is not difficult to see that $I_{\Delta}$ is nothing but the edge ideal $I(G)$ of $G$. We will present a explicit formula for the $a_0$-invariant of $S/I_{\Delta}^n$.

We begin with recalling a result from \cite{CHL}, which is actually a variation of  \cite[Theorem~6.1 and
Theorem~6.7]{B}. For this, we use $\{\{\ldots\}\}$ to represent a {\it multiset}. In particular, $\{\{1,2\}\}\neq \{\{1,2,2\}\}$, $\{\{1,2,2\}\}\subseteq \{\{1,2,2,3,4\}\}$, but $\{\{1,2,2\}\}\nsubseteq \{\{1,2,3,4\}\}$, and $\{\{1,2,2,3,3\}\}\cap \{\{2,2,2, 3\}\}=\{\{2,2,3\}\}$.

\begin{Lemma}
\label{connectedness}   Let $G$ be a simple graph with edge ideal
$I=I(G)$. Let $m\geq 1$ be an integer and  let  $e_1, e_2, \ldots, e_m$ (maybe repeatedly) be edges  of $G$  and $\mathbf{v}\in S$ a monomial  such that
$e_1e_2\cdots e_m\mathbf{ v} \in I^{m+1}$.  Then there exist variables  $w$ and $y$
with $wy|\mathbf{v}$  and an odd walk in $G$ connecting $w$ to $y$:
$$ w=z_1-z_2-z_3-\cdots-z_{2t}-z_{2t+1}-z_{2t+2}=y$$
 such that $\{\{z_2z_3,\ldots,z_{2t}z_{2t+1}\}\}\subseteq  \{\{e_1,\ldots,e_m\}\}$. Here, if $t=0$, then the walk means the edge $w-y$ (i.e., the edge $wy$).
\end{Lemma}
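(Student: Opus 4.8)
The plan is to turn the membership $e_1\cdots e_m\mathbf v\in I^{m+1}$ into a factorization and then extract the walk from an auxiliary graph on variable occurrences. Since $I^{m+1}$ is generated by products of $m+1$ edges, I first write $e_1\cdots e_m\mathbf v=g_1\cdots g_{m+1}\mathbf w$ with $g_1,\dots,g_{m+1}$ edges of $G$ and $\mathbf w$ a monomial. Reading this as an equality of monomials, I regard every occurrence of a variable (a ``position'') on either side as belonging to one specific edge or to $\mathbf v$ resp.\ $\mathbf w$. Because $g_1\cdots g_{m+1}$ divides $e_1\cdots e_m\mathbf v$, for each variable the number of its occurrences among the $g_j$ is at most the number among the $e_i$ and $\mathbf v$; hence there is an injective, label-preserving map $\phi$ from the $2(m+1)$ positions of $g_1,\dots,g_{m+1}$ into the $2m$ positions of $e_1,\dots,e_m$ together with the positions of $\mathbf v$. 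As $\phi$ is injective and there are only $2m$ positions among the $e_i$, at least two $g$-positions are sent by $\phi$ into $\mathbf v$; these will eventually produce the two endpoints $w,y$.

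Next I build an auxiliary graph $\Gamma$ whose vertices are all of these positions and whose edges come in three kinds: the two positions of each $g_j$ are joined (a ``$g$-link''), the two positions of each $e_i$ are joined (an ``$e$-link''), and each $g$-position $\sigma$ is joined to $\phi(\sigma)$ (a ``$\phi$-link''). Then every $g$-position has degree exactly two (one $g$-link and one $\phi$-link, while no $\phi$-link points at a $g$-position), every $e$-position has degree one or two (its $e$-link, plus a $\phi$-link iff it lies in $\mathrm{im}\,\phi$), and every $\mathbf v$-position has degree at most one (a $\phi$-link iff it lies in $\mathrm{im}\,\phi$). Consequently $\Gamma$ is a disjoint union of paths and cycles, the $g$-positions are never endpoints, and each degree-$\le 1$ vertex is either a $\mathbf v$-position in $\mathrm{im}\,\phi$ or an $e$-position outside $\mathrm{im}\,\phi$.

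The crux is to produce a path of $\Gamma$ both of whose endpoints are $\mathbf v$-positions. Let $k\ge 2$ be the number of $g$-positions sent into $\mathbf v$; then $\mathrm{im}\,\phi$ meets the $\mathbf v$-positions in exactly $k$ of them and meets the $e$-positions in $2m+2-k$ of them, so $\Gamma$ has $k$ endpoints of the first type and $2m-(2m+2-k)=k-2$ of the second. Writing $a,b,c$ for the numbers of paths with endpoint-types $(\mathbf v,\mathbf v)$, $(\mathbf v,e)$, $(e,e)$, I get $2a+b=k$ and $b+2c=k-2$, whence $a=c+1\ge 1$. Thus some path $P$ runs from a $\mathbf v$-position $S_0$ to a distinct $\mathbf v$-position $S_{\mathrm{end}}$.

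Finally I read the walk off $P$. Traversing $P$ from $S_0$, the links must alternate as $\phi,g,\phi,e,\phi,g,\phi,e,\dots$ and terminate with a $\phi$-link into $S_{\mathrm{end}}$: a $g$-position has degree two so the path cannot stop there, and an interior $e$-position must lie in $\mathrm{im}\,\phi$ to continue. Recording the labels of the successive positions, and noting that $\phi$-links preserve labels while $g$- and $e$-links join the two (distinct) endpoints of an edge of $G$, yields walk-vertices $z_1,z_2,z_3,\dots$ in which the $g$-links contribute the edges $z_1z_2,z_3z_4,\dots$ and the $e$-links contribute $z_2z_3,z_4z_5,\dots$; since each $e$-link uses a distinct $e_i$, the latter edges form a sub-multiset of $\{\{e_1,\dots,e_m\}\}$, as required. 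The walk begins and ends with a $g$-link, hence has odd length $2t+1$, its endpoints $w=z_1,\ y=z_{2t+2}$ are the labels of the two distinct $\mathbf v$-positions $S_0,S_{\mathrm{end}}$ so that $wy\mid\mathbf v$, and $t=0$ is exactly the single $g$-link $w-y$. I expect the main obstacle to be the parity bookkeeping that forces $P$ to be a $(\mathbf v,\mathbf v)$-path, since this is precisely what makes the walk odd and ends it on a $g$-link; once that is secured, the correct placement of the $e_i$ at the even steps follows from the way $\phi$ threads the $g$- and $e$-links together.
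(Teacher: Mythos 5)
Your proof is correct, but there is nothing in the paper to compare it against line by line: the paper does not prove Lemma~\ref{connectedness} at all, it quotes it from \cite{CHL}, where it arises as a multiset refinement of Banerjee's even-connectedness theorems \cite[Theorems 6.1 and 6.7]{B}; those sources argue by induction on the number of edge factors, peeling the factorization $e_1\cdots e_m\mathbf v=g_1\cdots g_{m+1}\mathbf w$ apart one step at a time. You instead make the argument global and non-inductive: the injective, variable-preserving matching $\phi$ of the $2m+2$ positions of the $g_j$ into the $2m$ positions of the $e_i$ together with the positions of $\mathbf v$; the auxiliary graph of $g$-links, $e$-links and $\phi$-links, which has maximum degree two; and the endpoint count $2a+b=k$, $b+2c=k-2$, hence $a=c+1\geq 1$, which forces a path joining two distinct $\mathbf v$-positions. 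I checked the delicate points and they all go through: since $G$ is simple, each $g$- and $e$-link joins positions with distinct labels, so every step of the extracted walk is a genuine edge; the traversal from a $\mathbf v$-endpoint necessarily alternates $\phi,g,\phi,e,\ldots$ and ends with a $\phi$-link, so the walk has odd length and its even-position edges come from $e$-links; the path visits each $e$-link (i.e.\ each slot $e_i$, slots of repeated edges being distinct) at most once, which is exactly the multiset containment $\{\{z_2z_3,\ldots,z_{2t}z_{2t+1}\}\}\subseteq\{\{e_1,\ldots,e_m\}\}$; the case $t=0$ degenerates to the single $g$-link, giving the edge $wy$; and since $S_0\neq S_{\mathrm{end}}$ as \emph{occurrences} in $\mathbf v$, the conclusion $wy\mid\mathbf v$ holds even when $w=y$, which is precisely the case the paper exploits in the proof of Proposition~\ref{squarefree}. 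One phrase needs repair: a $\mathbf v$-position outside $\mathrm{im}\,\phi$ has degree $0$, so your claim that each degree-$\leq 1$ vertex is a $\mathbf v$-position in $\mathrm{im}\,\phi$ or an $e$-position outside $\mathrm{im}\,\phi$ should be stated for degree-$1$ vertices only; the isolated $\mathbf v$-positions are harmless, since they contribute no path endpoints and your equations count only nontrivial paths. What your route buys is a short, self-contained proof in which the multiplicity bookkeeping (the refinement for which the paper had to cite \cite{CHL} rather than \cite{B}) is automatic, at the cost of setting up the occurrence machinery; the inductive route avoids that machinery but handles repeated edges and the multiset condition only with extra care.
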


If $\mathbf{u}$ is a monomial  and $I$ is a monomial ideal of $S$, as usual we set $\mathrm{order}_I(\mathbf{u})=\max\{n\geq 0\:\ \mathbf{u}\in I^n\}$. Following \cite{MV}, a {\it good decomposition} of $\mathbf{u}$ with respect to $I$ is a decomposition $\mathbf{u}=MN$ such that $M,N$ are monomials and $M$ is a minimal generator of $I^{\mathrm{order}_I(\mathbf{u})}$.

\begin{Proposition} \label{squarefree} Let $I$ be the edge ideal of a simple graph $G$.  If $\mathbf{u}$ is a $0$-th critical monomial of $I^n$, then for any good decomposition $\mathbf{u}=MN$ w.r.t. $I$, one has $\mathrm{order}_{I}(\mathbf{u})<n$ and $N$ is  squarefree.
\end{Proposition}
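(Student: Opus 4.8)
The plan is to dispatch the two assertions separately, the first being essentially immediate and the second carrying all the weight. For $\mathrm{order}_I(\mathbf{u})<n$, I would simply invoke the defining property: since $\mathbf{u}$ is a $0$-th critical monomial of $I^n$, Lemma~\ref{fun} gives $\sqrt{I^n:\mathbf{u}}=\mathfrak{m}$. If we had $\mathbf{u}\in I^n$, the colon ideal $I^n:\mathbf{u}$ would be all of $S$ and hence have radical $S\neq\mathfrak{m}$; so $\mathbf{u}\notin I^n$ and $\mathrm{order}_I(\mathbf{u})<n$. Writing $c=\mathrm{order}_I(\mathbf{u})$, and using that $M$ is a minimal generator of $I^c$ while $I=I(G)$ is generated by edges, I note that $M$ is a product of exactly $c$ edges, say $M=e_1\cdots e_c$.

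For the squarefreeness of $N$ I would argue by contradiction. Assume some $y^2\mid N$; the goal is to produce a factorization witnessing $\mathbf{u}\in I^{c+1}$, which contradicts $c=\mathrm{order}_I(\mathbf{u})$. First, since $y\in\mathfrak{m}=\sqrt{I^n:\mathbf{u}}$, there is $p\geq 1$ with $y^p\mathbf{u}\in I^n$, and as $c<n$ we have $y^p\mathbf{u}=e_1\cdots e_c\,(y^pN)\in I^n\subseteq I^{c+1}$. This is exactly the hypothesis of Lemma~\ref{connectedness} with $m=c$ and $\mathbf{v}=y^pN$, which then supplies variables $w,y'$ with $wy'\mid y^pN$ together with an odd walk $w=z_1-z_2-\cdots-z_{2t+2}=y'$ whose interior edges $z_2z_3,\dots,z_{2t}z_{2t+1}$ form a sub-multiset of $\{\{e_1,\dots,e_c\}\}$.

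The engine of the contradiction is the odd-walk identity obtained by grouping the $2t+1$ walk edges in the two complementary ways. The monomial $z_1z_2\cdots z_{2t+2}$ equals both $(z_1z_2)(z_3z_4)\cdots(z_{2t+1}z_{2t+2})$, a product of $t+1$ genuine edges of $G$, and $w\,(z_2z_3)\cdots(z_{2t}z_{2t+1})\,y'$. Hence $(z_2z_3)\cdots(z_{2t}z_{2t+1})\cdot wy'\in I^{t+1}$, so the $t$ interior edges can be traded for one extra power of $I$ at the cost of $wy'$. I would then delete these $t$ interior edges from $M$ (legitimate by the multiset containment), leaving $c-t$ edges, and regroup $\mathbf{u}$ as the product of $(z_2z_3)\cdots(z_{2t}z_{2t+1})\cdot wy'\in I^{t+1}$ with the remaining $c-t$ edges times $N/(wy')\in I^{c-t}$, yielding $\mathbf{u}\in I^{c+1}$.

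The single delicate point, which I expect to be the crux, is that this regrouping needs $wy'\mid N$, whereas Lemma~\ref{connectedness} only guarantees $wy'\mid y^pN$. This is precisely where the standing assumption $y^2\mid N$ is used: the only variable whose multiplicity differs between $y^pN$ and $N$ is $y$, and $wy'$ involves $y$ to exponent at most two, so $y^2\mid N$ forces $wy'\mid N$ in every case (whether $w=y'=y$, exactly one of $w,y'$ equals $y$, or neither does). This closes the contradiction and forces $N$ squarefree. Finally I would treat the degenerate case $c=0$ (so $M=1$ and $N=\mathbf{u}$) by the same mechanism with $t=0$: if $y^2\mid\mathbf{u}$, then $y^p\mathbf{u}\in I$ forces an edge already dividing $\mathbf{u}$, contradicting $\mathbf{u}\notin I$.
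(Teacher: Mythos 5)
Your proposal is correct and takes essentially the same route as the paper: both deduce $\mathrm{order}_I(\mathbf{u})<n$ directly from $\sqrt{I^n:\mathbf{u}}=\mathfrak{m}$, and both prove squarefreeness by multiplying $\mathbf{u}$ by a power of the offending variable, applying Lemma~\ref{connectedness} to get an odd walk whose interior edges lie in the multiset of edges of $M$, and then trading those $t$ interior edges together with $wy'$ for $t+1$ edges to contradict the value of $\mathrm{order}_I(\mathbf{u})$. Your explicit three-case verification that $wy'\mid N$, and your separate treatment of the degenerate case $\mathrm{order}_I(\mathbf{u})=0$, are just a more carefully spelled-out version of the divisibility step the paper compresses into ``$wx_i\mid N$ and $yx_i\mid N$, hence $w=y=x_i$.''
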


\begin{proof} If $\mathrm{order}_{I}(\mathbf{u})\geq n$, then $\mathbf{u}\in I^n$ and $\sqrt{I^n:\mathbf{u}}=S$, a contradiction. This implies $\mathrm{order}_{I}(\mathbf{u})< n$.  Set $\ell:=\mathrm{order}_{I}(x^{\mathbf{a}})$ and write  $M$ as $M=e_1\cdots e_{\ell}$ with $e_i\in E(G)$ for $i=1,\ldots,\ell$. To prove $N$ is  squarefree, we assume on the contrary that there exists  $i\in [r]$ with $\deg_i(N)\geq 2$. Since $M(x_i^kN)\in I^n\subseteq I^{\ell+1}$ for some $k>0$ ($\because \sqrt{I^n: \mathbf{u}}=\mathfrak{m}$), it follows from Lemma~\ref{connectedness} that there exist variables (vertices) $w$ and $y$ with $wy|x_i^kN$ and an odd walk $$ w=z_1-z_2-z_3-\cdots-z_{2t}-z_{2t+1}-z_{2t+2}=y$$ in $G$ such that $\{\{z_2z_3,\ldots,z_{2t}z_{2t+1}\}\}\subseteq  \{\{e_1,\ldots,e_{\ell}\}\}$. Note that  $wx_i|N$ and $yx_i| N$. Since $wy\nmid N$, it is clear that $w=y=x_i$ and so $x_i^2M\in I^{\ell+1}$. But  $x_i^2M$ divides $MN$, we have $\mathrm{order}_{I}(\mathbf{u})\geq \ell+1$, a contradiction.\end{proof}

In the rest part of this section, $G$ is always the the complementary graph of $\Delta$.

\begin{Proposition} \label{three}  Let $I$ be the Stanley-Reisner  ideal of an one-dimensional complex  $\Delta$ with $\mathrm{girth}(\Delta)\geq 4$. Then $a_0(S/I^n)\in \{-\infty,  2n-1, 2n\}.$
\end{Proposition}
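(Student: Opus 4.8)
The plan is to work with a $0$-th extremal monomial $\mathbf{u}$ of $I^n$, so that $a_0(S/I^n)=\deg(\mathbf{u})$ by definition, and to trap $\deg(\mathbf{u})$ between $2n-1$ and $2n$; the value $-\infty$ occurs precisely when no $0$-th critical monomial exists. Throughout I would use two structural facts: that $\mathrm{girth}(\Delta)\geq 4$ forces $\alpha(G)=2$, i.e.\ any three vertices of $G$ span at least one edge; and that every monomial of $I^n$ has degree at least $2n$, since each minimal generator of $I^n$ is a product of $n$ edges and hence a monomial of degree $2n$.

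For the upper bound $\deg(\mathbf{u})\leq 2n$, I would fix a good decomposition $\mathbf{u}=MN$ with respect to $I$. By Proposition~\ref{squarefree} we have $\ell:=\mathrm{order}_I(\mathbf{u})\leq n-1$ and $N$ squarefree, so $M$ is a product of $\ell$ edges and $\deg(\mathbf{u})=2\ell+\deg(N)$. The key claim is $\deg(N)=|\mathrm{supp}(N)|\leq 2$: if $\mathrm{supp}(N)$ contained three vertices then, since $\alpha(G)=2$, two of them would form an edge $e\in E(G)$, and as $N$ is squarefree we would have $e\mid N$; then $Me\mid \mathbf{u}$ with $Me\in I^{\ell+1}$, contradicting $\mathrm{order}_I(\mathbf{u})=\ell$. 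Hence $\deg(\mathbf{u})\leq 2(n-1)+2=2n$.

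For the lower bound I would exploit extremality. If some variable $x_v$ satisfied $x_v\mathbf{u}\notin I^n$, then, since $\sqrt{I^n:x_v\mathbf{u}}\supseteq\sqrt{I^n:\mathbf{u}}=\mathfrak{m}$ and this colon ideal is proper, $x_v\mathbf{u}$ would again be a $0$-th critical monomial (by Lemma~\ref{fun}), now of strictly larger degree, contradicting the extremality of $\mathbf{u}$. Therefore $x_v\mathbf{u}\in I^n$ for every variable $x_v$. Choosing any one such $v$ and using that every element of $I^n$ has degree at least $2n$, we obtain $\deg(\mathbf{u})+1=\deg(x_v\mathbf{u})\geq 2n$, so $\deg(\mathbf{u})\geq 2n-1$. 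Combining the two bounds yields $a_0(S/I^n)\in\{-\infty,\,2n-1,\,2n\}$.

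The routine ingredients here are Proposition~\ref{squarefree} and Lemma~\ref{fun}; the only genuinely delicate point is the bound $\deg(N)\leq 2$, where the hypothesis $\mathrm{girth}(\Delta)\geq 4$ enters decisively through $\alpha(G)=2$. I expect this step, rather than the extremality argument, to be the crux, and I would take care to verify that the edge produced inside $\mathrm{supp}(N)$ actually divides $N$ (which requires $N$ squarefree, guaranteed by Proposition~\ref{squarefree}), so that the order of $\mathbf{u}$ genuinely increases and the contradiction is valid.
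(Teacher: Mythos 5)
Your proof is correct and takes essentially the same route as the paper's: a good decomposition $\mathbf{u}=MN$, Proposition~\ref{squarefree} to get $\mathrm{order}_I(\mathbf{u})\leq n-1$ and $N$ squarefree, and $\alpha(G)= 2$ to bound $\deg(N)\leq 2$ for the upper bound. Your lower bound, obtained by showing extremality forces $x_v\mathbf{u}\in I^n$ for every variable and then using that $I^n$ lives in degrees $\geq 2n$, is just a streamlined repackaging of the paper's case analysis (ruling out $|N|=0$ and $\mathrm{order}_I(M)\leq n-2$ by exhibiting the larger critical monomial $x_1\mathbf{u}$), so the two arguments are the same in substance.
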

\begin{proof}  Assume that  $a_0(S/I^n)\neq -\infty$. This means that the set of $0$-th extremal monomials of $I^n$ is not empty. Let $\mathbf{u}$ be  such an monomial  and  let $\mathbf{u}=MN$ be a good decomposition  with respect to $I$. Then $\mathrm{order}_I(M)\leq (n-1)$. Moreover, $N$ is squarefree by Proposition~\ref{squarefree}. It follows that $|N|\in \{0,1,2\}$ since $\alpha(G)\leq 2$.

We next show that $|N|\neq 0$ and $\mathrm{order}_I(M)= (n-1)$.
If $|N|=0$, then $\deg(x_1\mathbf{u})\leq (2n-1)$ and $x_1\mathbf{u}\notin I^n$. It follows that  $x_1\mathbf{u}$ is also a $0$-th critical monomial. This is impossible and  so $|N|\neq 0$. Similarly, if $\mathrm{order}_I(M)\leq (n-2)$, then $\deg (x_1\mathbf{u})\leq (2n-1)$  and $x_1\mathbf{u}\notin I^n$, and so  $x_1\mathbf{u}$ is also  $0$-th critical, a contradiction again.

Hence $|N|\in \{1,2\}$ and $\mathrm{order}_I(M)= (n-1)$. From this it follows that $\deg(\mathbf{u})\in \{2n-1, 2n\}$. Consequently, $a_0(S/I^n)\in \{2n-1, 2n\}.$
\end{proof}

The case when $G$ is disconnected is solved in the following example.

\begin{Example}\label{disconnected} \em Let $G$ be a disconnected simple graph with $\alpha(G)=2$. Then $G$ is the disjoint union of two complete graphs, say $\mathbb{K}_s$ and $\mathbb{K}_t$. (In this case, $\Delta$ is a complete bipartite graph). Here $\mathbb{K}_i$ denotes a complete graph with $i$ vertices. If either $s$ or $t$ is 2, then $a_0(S/I(G)^n)=-\infty$ for all $n$;  If both $s$ and $t$ are large than 2, then  $a_0(S/I(G)^n)$ equals to $-\infty$ for $n=1,2$, and to $2n$ for all $n\geq 3$ by using the same construction as in  the proof of  Proposition~\ref{2n}.
\end{Example}

We  study when $a_0(S/I_{\Delta}^n)=-\infty$.  Note that   $a_0(S/I_{\Delta}^n)=-\infty\Longleftrightarrow H^0_{\mathfrak{m}}(S/I_{\Delta}^n)=0$$\Longleftrightarrow \mathrm{depth}(S/I_{\Delta}^n)>0\Longleftrightarrow$ there exists no  $0$-th critical monomials of $I_{\Delta}^n$.

\begin{Lemma} \label{G1G2} Let $G$ be a connected simple graph with $\alpha(G)=2$. If there exists an induced subgraph of $G$ isomorphic to one of the  graphs in Figure~\ref{g4},
  then $\mathrm{depth}(S/I(G)^n)=0$ for $n\geq 3$.
\end{Lemma}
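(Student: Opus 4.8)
The plan is to translate the conclusion into the existence of a $0$-th critical monomial and then to exhibit one explicitly. As the text preceding the lemma records, $\mathrm{depth}(S/I(G)^n)=0$ is equivalent to the existence of a $0$-th critical monomial of $I(G)^n$, so it suffices to produce, for each graph $H$ in Figure~\ref{g4} occurring as an induced subgraph of $G$, a monomial $\mathbf{u}$ with $\mathbf{u}\notin I(G)^n$ and $x_v^k\mathbf{u}\in I(G)^n$ for every vertex $v$ and some $k\geq 1$; equivalently $\sqrt{I(G)^n:\mathbf{u}}=\mathfrak{m}$, which is condition (3) of Lemma~\ref{fun}.

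First I would fix the shape of the candidate. Each graph in Figure~\ref{g4} contains two triangles arranged so that one triangle passes through a vertex $p$ and the other through a vertex $q$, with $\{p,q\}$ independent. I would take $\mathbf{u}=M\,x_px_q$, where $M$ is a product of exactly $n-1$ edges of $H$ chosen below; then $\mathrm{order}_I(\mathbf{u})\leq n-1$. That $\mathbf{u}\notin I(G)^n$ is the easy half: if $\mathbf{u}=e_1\cdots e_{n-1}\,x_px_q\in I(G)^n$, then Lemma~\ref{connectedness} (with $m=n-1$ and $\mathbf{v}=x_px_q$) yields variables $w,y$ with $wy\mid x_px_q$ joined by an odd walk whose interior edges form a sub-multiset of $\{\{e_1,\ldots,e_{n-1}\}\}$. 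The only possibility is $\{w,y\}=\{p,q\}$, i.e.\ an odd $p$--$q$ walk inside $M$; so I only have to choose $M$ so that no such walk exists, and then $\mathbf{u}\notin I(G)^n$ follows.

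Next I would verify absorption $x_v^k\mathbf{u}\in I(G)^n$ for all $v$, and this is where $\alpha(G)=2$ does the work. I would take $\{p,q\}$ to be a maximum independent set; since $\alpha(G)=2$, every vertex $v\notin\{p,q\}$ is adjacent to $p$ or to $q$, so $x_vx_p$ (say) is an edge and $x_v\mathbf{u}=(x_vx_p)\,M\,x_q\in I\cdot I^{n-1}=I(G)^n$ for free. For $v=p$ I would use that $p$ lies on a triangle $\{p,z,z'\}$: choosing the opposite edge $zz'$ to be one of the factors of $M$, the identity $x_p^2\,(x_zx_{z'})=(x_px_z)(x_px_{z'})$ shows $x_p^2M\in I(G)^n$, hence $x_p\mathbf{u}\in I(G)^n$; this is exactly the converse of Lemma~\ref{connectedness} for the closed walk $p-z-z'-p$. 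Symmetrically I would force $q$ onto a triangle $\{q,u,u'\}$ and put $uu'$ among the factors of $M$. The remaining $n-3$ factors of $M$ are padding, which is where the hypothesis $n\geq 3$ enters: I would repeat a safe edge (for instance $zz'$) so that no new odd $p$--$q$ walk is created.

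The main obstacle will be the simultaneous fulfilment of the two competing requirements on $M$: it must be rich enough to contain a triangle walk at $p$ and at $q$ so that $p,q$ can be absorbed, yet poor enough to contain \emph{no} odd $p$--$q$ walk so that $\mathbf{u}\notin I(G)^n$. Whether these coexist is governed precisely by how the two triangles of $H$ sit relative to $\{p,q\}$ --- the adjacency of the two odd cycles that defines the graphs in Figure~\ref{g4} --- so the heart of the argument is a short case check, for each of the two graphs, that the vertices of the triangle through $p$ are non-adjacent to $q$ and vice versa, which rules out the length-$3$ walks $p-z-z'-q$ and $q-u-u'-p$, and that the chosen padding introduces no further odd connection. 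This mirrors the disconnected model of Example~\ref{disconnected}, where two disjoint triangles already force $a_0(S/I(G)^n)=2n$ for $n\geq 3$.
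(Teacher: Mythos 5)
Your plan fails at exactly the step you flag as the heart of the argument: the two requirements on $M$ are not merely in tension for the graphs of Figure~\ref{g4}, they are incompatible, and your structural description of those graphs is wrong. The graph $\mathbb{G}_1$ is a triangle $\{1,2,3\}$ with a pendant vertex $4$ attached to $3$; it contains only \emph{one} triangle, so for the independent pair $\{p,q\}$ with $q=4$ there is no triangle through $q$ at all, and the hypothesis gives you nothing outside the induced copy ($G$ may be $\mathbb{G}_1$ itself), so the absorption $x_q^2(x_ux_{u'})=(x_qx_u)(x_qx_{u'})$ cannot even get started. In $\mathbb{G}_2$ both triangles exist ($\{1,2,3\}$ through $p=2$ and $\{1,3,4\}$ through $q=4$), but they share the edge $\{1,3\}$, whose endpoints are adjacent to both $p$ and $q$; placing the factor $x_1x_3$ in $M$, as your absorption step forces you to, creates the odd walk $2-1-3-4$ with interior edge in $M$, whence $\mathbf{u}=Mx_2x_4=(x_1x_2)(x_3x_4)\cdot\bigl(M/(x_1x_3)\bigr)\in I^n$ and the candidate dies. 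The failure is structural, not a matter of cleverer padding: you are trying to build a $0$-th critical monomial of degree $2n$, but in $\mathbb{G}_1$ and $\mathbb{G}_2$ all odd cycles are pairwise adjacent, so Proposition~\ref{cycle} forces $|N|\leq 1$ in every good decomposition of a critical monomial, capping the degree at $2n-1$; indeed for $G=\mathbb{G}_1$ or $\mathbb{G}_2$ the paper records $a_0(S/I(G)^n)=2n-1$. You appear to have conflated Figure~\ref{g4} with the graphs $\mathbb{G}_3$, $\mathbb{G}_4$, $\mathbb{G}_5$ of Example~\ref{G3G4G5}: it is two \emph{non-adjacent} odd cycles that permit a degree-$2n$ critical monomial (Proposition~\ref{2n}), which is precisely what the graphs in Figure~\ref{g4} lack.

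The repair, and the paper's actual route, is to abandon the squarefree cofactor of size two and allow a square, aiming at degree $2n-1$: take $\mathbf{u}=(x_1x_2)^{n-2}x_3^2x_4$. Then $\mathbf{u}\notin I^n$ for the trivial reason that $\deg\mathbf{u}=2n-1<2n$ --- no appeal to Lemma~\ref{connectedness} is needed for this half --- while $\mathrm{order}_I(\mathbf{u})=n-1$ since $(x_1x_2)^{n-2}(x_3x_4)$ divides $\mathbf{u}$. Absorption works uniformly for $\mathbb{G}_1$ and $\mathbb{G}_2$: $x_3\mathbf{u}=(x_1x_2)^{n-3}(x_1x_3)(x_2x_3)(x_3x_4)$, $x_4\mathbf{u}=(x_1x_2)^{n-2}(x_3x_4)^2$, and similarly for $x_1,x_2$; for an outside vertex $i\geq 5$, your (correct) observation that $\alpha(G)=2$ forces $i$ to be adjacent to $2$ or to $4$ --- the pair $\{2,4\}$ is independent in $G$ because the subgraph is \emph{induced} --- yields $x_i\mathbf{u}=(x_1x_2)^{n-3}(x_1x_3)(x_2x_i)(x_3x_4)$ or $(x_1x_2)^{n-3}(x_1x_3)(x_2x_3)(x_ix_4)\in I^n$. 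Note that $n\geq 3$ enters through the factor $(x_1x_2)^{n-3}$ in these identities, not through padding of $M$. So the one ingredient of your proposal that survives is the use of $\alpha(G)=2$ to handle vertices outside the induced copy; the shape of the candidate monomial itself has to change.
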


\begin{figure}[ht!]

\begin{tikzpicture}[line cap=round,line join=round,>=triangle 45,x=1.5cm,y=1.5cm]

\draw (7,1)-- (5,1)--(6,2)--(7,1);
\draw (6,2)--(6,3);

\draw (6,3) node[anchor=south east]{4};

\draw (6,2) node[anchor=south east]{3};

\draw (5,1) node[anchor=south east]{2};
\draw (7.2,1) node[anchor=south east]{1};

\fill [color=black] (6,3) circle (1.5pt);
\fill [color=black] (7,1) circle (1.5pt);\fill [color=black] (6,2) circle (1.5pt);
\fill [color=black] (5,1) circle (1.5pt);

\draw (11.7,0.5) node[anchor=north east]{$\mathbb{G}_2$};

\draw (12,1)-- (10,1)--(11,2)--(12,1);
\draw (11,2)--(11,3); \draw  (12,1)--(11,3);

\draw (11,3) node[anchor=south east]{4};

\draw (11,2) node[anchor=south east]{3};

\draw (10,1) node[anchor=south east]{2};
\draw (12.2,1) node[anchor=south east]{1};

\draw (6.3,0.5) node[anchor=north east]{$\mathbb{G}_1$};

\fill [color=black] (11,3) circle (1.5pt);
\fill [color=black] (12,1) circle (1.5pt);\fill [color=black] (11,2) circle (1.5pt);
\fill [color=black] (10,1) circle (1.5pt);

\end{tikzpicture}
\caption{}\label{g4}
\end{figure}

\begin{proof} Denote $I(G)$ by $I$.  For $n\geq 3$ let $\mathbf{u}=(x_1x_2)^{n-2}x_3^2x_4$. We show that $\mathbf{u}$ is a $0$-th critical monomial of $I^n$. It is clear that $\mathrm{order}_I(\mathbf{u})=n-1$ and $x_i\mathbf{u}\in I^n$ for any $i\leq 4$. If $i\geq 5$, then, since $\{x_4, x_2\}$ is an independent set of $G$, either $x_ix_4$ or $x_2x_i$ is an edge of $G$. It follows that $$x_i\mathbf{u}=(x_1x_2)^{n-3}(x_1x_3)(x_2x_i)(x_3x_4)=(x_1x_2)^{n-3}(x_1x_3)(x_2x_3)(x_ix_4)$$  belongs to $I^n$.  This shows that  that $\mathbf{u}$ is a $0$-th critical monomial of $I^n$, and hence $\mathrm{depth}(S/I^n)=0$ for $n\geq 3$.
\end{proof}

Recall that a triangle in a graph $G$ is {\it dominating} if every vertex of $G$ is adjacent to at least one  of vertices of this triangle. By \cite[Theorem 3.1]{HH1},  $G$ has a dominating triangle if and only if \begin{equation}\label{depthfunction1}
\tag{Formu.1}\mathrm{depth}(S/I(G)^n)=\left\{
                                                                                                \begin{array}{ll}
                                                                                                  1, & \hbox{$n=1$;} \\
                                                                                                  0, & \hbox{$n\geq 2$.}
                                                                                                \end{array}
                                                                                              \right.
\end{equation}

The case when $|V(G)|\leq 5$ is discussed in the following example.
\begin{Example} \em  Let $G$ be a connected simple graph with $\alpha(G)=2$ and $4\leq|V(G)|\leq 5$. If $G$ is a complete graph, then,
since $G$ contains a dominating triangle, the depth function $\mathrm{depth}(S/I(G)^n)$ of $I(G)$ is given in (\ref{depthfunction1}). Hence we now assume that $G$ is not a complete graph.

If $|V(G)|=4$, then $G\in \{\mathbb{G}_1,\mathbb{G}_2, \mathbb{P}_3, \mathbb{C}_4\}$.  Here $\mathbb{G}_1,\mathbb{G}_2$ are graphs in Lemma~\ref{g4},   $\mathbb{C}_4$ is a cycle of length 4, and $\mathbb{P}_3$ is a path of length 3. By   \cite[Theorem 4.4]{Tr}, $\mathrm{depth}(S/I(G)^n)$ is positive for all $n\geq 1$  if $G\in \{\mathbb{C}_4,\mathbb{P}_3\}$, while the depth functions of $I(G)$ with $G\in\{\mathbb{G}_1,\mathbb{G}_2\}$ are also given by (\ref{depthfunction1}).

Assume that $|V(G)|=5$. If $G=\mathbb{C}_5$, the cycle of length 5, then $\mathrm{depth}(S/I(G)^n)$ is positive for $n=1,2$ and is 0 for $n\geq 3$. If $G$ is not isomorphic to $\mathbb{C}_5$, then either $G$ contains a dominating triangle or $G$ is isomorphic to  the graph with edge set $E(G)=\{\{1,2\},\{2,3\},\{3,1\},\{1,4\},\{4,5\}\}$. In the former case, $\mathrm{depth}(S/I(G)^n)=0$ for $n\geq 2$; In the latter case
$\mathrm{depth}(S/I(G)^n)$ is positive if $n=2$ and is $0$ for $n\geq 3$. Moreover, in all the cases above, $a_0(S/I(G)^n)=2n-1$ if $\mathrm{depth}(S/I(G)^n)=0$.

\end{Example}

We consider the case when $|V(G)|\geq 6$.
\begin{Proposition}  Let $G$ be a connected simple graph with $\alpha(G)=2$ and $|V(G)|\geq 6$.

\begin{enumerate}
\item If $n\geq 3$, then $\mathrm{depth}(S/I(G)^n)=0$;

\item If $n=2$, then $\mathrm{depth}(S/I(G)^n)=0$ if and only if $G$ has a dominating triangle.

\end{enumerate}
\end{Proposition}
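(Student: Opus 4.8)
The plan is to prove the two parts by quite different mechanisms, where throughout I write $I=I(G)$: part (1) is reduced to Lemma~\ref{G1G2} and the dominating‑triangle formula (\ref{depthfunction1}), while part (2) is settled by a direct classification of the $0$-th critical monomials of $I^2$.

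For part (1), I would first note that since $|V(G)|\geq 6$ and $\alpha(G)=2$, Ramsey's theorem ($R(3,3)=6$) forces $G$ to contain a triangle $T=\{1,2,3\}$. Fix such a $T$ and inspect how each other vertex attaches to it. If some $v\notin T$ is adjacent to exactly one (respectively exactly two) vertices of $T$, then the induced subgraph on $\{v\}\cup T$ is isomorphic to $\mathbb{G}_1$ (respectively $\mathbb{G}_2$), and Lemma~\ref{G1G2} yields $\mathrm{depth}(S/I^n)=0$ for $n\geq 3$. Otherwise every vertex outside $T$ is adjacent either to all of $T$ or to none of $T$; call these two sets $A$ and $B$. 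If $B=\emptyset$ then $T$ is a dominating triangle and (\ref{depthfunction1}) applies. If $B\neq\emptyset$, I would observe that $B$ is a clique (two non‑neighbours of $1$ must be adjacent, as $\alpha(G)=2$), that connectedness forces $A\neq\emptyset$, and that some edge $\{a,b\}$ with $a\in A$, $b\in B$ must exist; then the induced subgraph on $\{1,2,a,b\}$ is a triangle $\{1,2,a\}$ with the pendant $b$ at $a$, i.e. a copy of $\mathbb{G}_1$, and Lemma~\ref{G1G2} again applies. These cases are exhaustive, proving (1).

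For part (2), the implication $(\Leftarrow)$ is immediate from (\ref{depthfunction1}). For the converse let $\mathbf{u}$ be a $0$-th critical monomial of $I^2$, so $\mathbf{u}\notin I^2$ while $\sqrt{I^2:\mathbf{u}}=\mathfrak{m}$, and put $W=\mathrm{supp}(\mathbf{u})$. The key point is that $\mathbf{u}\notin I^2$ forces any two edges of the induced graph $G_W$ to meet: two disjoint edges inside $W$ would multiply to a divisor of $\mathbf{u}$ lying in $I^2$. Hence the edges of $G_W$ form either a star or a triangle (and $G_W$ has at least one edge, else $\mathbf{u}\notin I$ and criticality fails at once). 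In the star case, all edges pass through a vertex $v$, so $W\setminus\{v\}$ is independent and $|W|\leq 3$ by $\alpha(G)=2$; a short check then shows that $x_i^k\mathbf{u}$ can never contain two edges, contradicting $\sqrt{I^2:\mathbf{u}}=\mathfrak{m}$. In the triangle case, with $\{1,2,3\}\subseteq W$ carrying all edges of $G_W$, I would show every other vertex of $W$ is non‑adjacent to $1,2,3$ (otherwise $\mathbf{u}$ has two disjoint edges) so that, together with $\{1\}$, it forms an independent set; thus there is at most one such extra vertex. Since $\deg_i(\mathbf{u})=1$ for $i\in\{1,2,3\}$ (else $x_i^2x_jx_k\mid\mathbf{u}$ lies in $I^2$) and an extra vertex, being isolated from the only edges, would violate criticality, one is left with exactly $\mathbf{u}=x_1x_2x_3$. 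Finally, criticality of $x_1x_2x_3$ at the remaining vertices says precisely that each is adjacent to some vertex of the triangle, i.e. $\{1,2,3\}$ is dominating.

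The main obstacle is this converse in part (2): ruling out any exotic critical monomial of $I^2$ when $G$ has no dominating triangle. The crux is the ``star or triangle'' dichotomy for the edges in the support, coming from $\mathbf{u}\notin I^2$, followed by the verification that only the bare triangle monomial $x_1x_2x_3$ survives the condition $\sqrt{I^2:\mathbf{u}}=\mathfrak{m}$. The place where care is needed is the bookkeeping of multiplicities—deciding when a product of two edges divides a possibly non‑squarefree $\mathbf{u}$—and here Proposition~\ref{squarefree} is what keeps the exponents under control by supplying a good decomposition with squarefree tail.
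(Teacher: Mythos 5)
Your proof is correct, but it splits from the paper in an instructive way. For part (1) you and the paper follow the same skeleton (Ramsey gives a triangle; an induced $\mathbb{G}_1$ or $\mathbb{G}_2$ plus Lemma~\ref{G1G2} kills the depth for $n\geq 3$; the dominating-triangle formula (\ref{depthfunction1}) handles the remaining case), except that the paper simply asserts that a non-complete $G$ contains an induced $\mathbb{G}_1$ or $\mathbb{G}_2$, whereas you actually justify the needed case analysis: a vertex meeting a triangle $T$ in exactly one or two vertices gives $\mathbb{G}_1$ or $\mathbb{G}_2$ directly, and when every outside vertex sees all or none of $T$ you either land in the dominating-triangle case ($B=\emptyset$) or manufacture a $\mathbb{G}_1$ from an $A$--$B$ edge forced by connectedness. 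This is strictly more careful than the paper's one-line claim, and routing the $B=\emptyset$ case through (\ref{depthfunction1}) neatly sidesteps having to prove that claim in full generality. For part (2) the divergence is genuine: the paper disposes of it entirely by citing \cite[Theorem 3.1]{HH1}, while you only use that citation for the easy direction ($\Leftarrow$) and reprove the hard direction ($\Rightarrow$) from scratch by classifying the $0$-th critical monomials of $I^2$. Your classification is sound: two disjoint edges in the support of $\mathbf{u}$ would place $\mathbf{u}$ in $I^2$, so the edges of $G_{\mathrm{supp}(\mathbf{u})}$ pairwise meet and form a star or a triangle; in the star case every product of two edges requires the square of the center, and checking criticality at a non-central support vertex (where only edges inside the support are available) yields the contradiction; in the triangle case $\alpha(G)=2$ caps the number of extra isolated support vertices at one, the exponent bound $\deg_i(\mathbf{u})=1$ and the failure of criticality at the extra vertex force $\mathbf{u}=x_1x_2x_3$, and criticality at the remaining vertices is precisely domination. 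What your route buys is a self-contained combinatorial proof of the $n=2$ characterization, at the cost of length; what the paper's citation buys is brevity, at the cost of opacity (note that (\ref{depthfunction1}) as stated is an equivalence with the whole depth function, so extracting ``$\mathrm{depth}(S/I^2)=0 \Rightarrow$ dominating triangle'' from it relies on the precise content of \cite[Theorem 3.1]{HH1}, which your argument renders unnecessary).

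One small imprecision to fix: in the edgeless case of your dichotomy you write that $\mathbf{u}\notin I$ makes ``criticality fail at once,'' but critical monomials always lie outside $I^n$, so this is not by itself a contradiction. The correct (and essentially what you do elsewhere) observation is that for $i\in\mathrm{supp}(\mathbf{u})$ the monomial $x_i^k\mathbf{u}$ has support equal to $\mathrm{supp}(\mathbf{u})$, which carries no edge, so $x_i^k\mathbf{u}\notin I^2$ for all $k$, contradicting $\sqrt{I^2:\mathbf{u}}=\mathfrak{m}$. Also, the closing appeal to Proposition~\ref{squarefree} is not actually needed in your argument: since every vertex of $\mathrm{supp}(\mathbf{u})$ has exponent at least $1$, the squarefree product of two disjoint edges already divides $\mathbf{u}$ without any good-decomposition bookkeeping.
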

\begin{proof} If $G$ is a complete graph, then $\mathrm{depth}(S/I(G)^n)=0$ for $n\geq 2$; If $G$ is not a complete graph, then, since $G$ contains a triangle,  $G$ contains an induced subgraph isomorphic to either $\mathbb{G}_1$ or $\mathbb{G}_2$, and so $\mathrm{depth}(S/I(G)^n)=0$ for $n\geq 3$ by Lemma~\ref{G1G2}. This proves (1). As to   (2), it follows directly from \cite[Theorem 3.1]{HH1}.
\end{proof}

We now have known when $a_0(S/I_{\Delta}^n)\neq -\infty$. In the following propositions, we determine when $a_0(S/I_{\Delta}^n)= 2n-1$ or $2n$ in terms of the graphical properties of $G$.  We call a cycle $C_1$ is {\it  not adjacent to} another  cycle $C_2$  if $V(C_1)\cap V(C_2)=\emptyset$ and every vertex $C_1$ is not adjacent to any vertex in $C_2$. The following proposition is similar to \cite[Proposition 6]{CHL}.

\begin{Proposition}  \label{cycle} Let $I$ be the edge ideal of a simple graph $G$ and  $n\geq 2$ an integer. Assume that    any two odd cycles $C_1,C_2$  of $G$ with $\ell(C_1)+\ell(C_2)\leq n$ are adjacent to each  other.  Then, for any  $0$-th critical monomial $\mathbf{u}$  of $I^n$ and any good decomposition $\mathbf{u}=MN$ of $\mathbf{u}$, one has   $|N|\leq 1$.
\end{Proposition}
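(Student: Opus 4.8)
The plan is to argue by contradiction, turning a good decomposition with $|N|\ge 2$ into two non-adjacent odd cycles of controlled total length. Suppose then that $\mathbf{u}=MN$ is a good decomposition of a $0$-th critical monomial of $I^n$ with $|N|\ge 2$, and fix two distinct variables $w,y$ dividing $N$. Write $\ell:=\mathrm{order}_I(\mathbf{u})$ and $M=e_1\cdots e_\ell$ with $e_i\in E(G)$; by Proposition~\ref{squarefree}, $\ell\le n-1$ and $N$ is squarefree. Apart from Lemma~\ref{connectedness} I would use its elementary converse: if $a=z_1-z_2-\cdots-z_{2t+2}=b$ is an odd walk whose even-indexed edges $z_2z_3,\ldots,z_{2t}z_{2t+1}$ form a sub-multiset of $\{\{e_1,\ldots,e_\ell\}\}$ and whose endpoints satisfy $ab\mid N$, then deleting these $t$ edges from $M$ and inserting instead the $t+1$ odd-indexed edges $z_1z_2,\ldots,z_{2t+1}z_{2t+2}$ produces a product of $\ell+1$ edges whose multidegree is $M\cdot ab$; since $M\cdot ab$ divides $MN$, this forces $\mathbf{u}\in I^{\ell+1}$, contradicting $\mathrm{order}_I(\mathbf{u})=\ell$.

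First I would attach an odd closed walk to the vertex $w$. As $\mathbf{u}$ is $0$-th critical and $\ell<n$, we have $w\in\sqrt{I^n:\mathbf{u}}=\mathfrak{m}$, so $w^p\mathbf{u}\in I^n\subseteq I^{\ell+1}$ for some $p\ge 1$. Applying Lemma~\ref{connectedness} to $e_1\cdots e_\ell\cdot(w^pN)\in I^{\ell+1}$ yields an odd walk with endpoints $a,b$ satisfying $ab\mid w^pN$ and even-indexed edges among the $e_i$. If $a$ and $b$ are not both equal to $w$, then, because $N$ is squarefree and $w\mid N$, the product $ab$ of the endpoints divides $N$, and the converse above gives $\mathbf{u}\in I^{\ell+1}$, a contradiction. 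Hence $a=b=w$, and we obtain an odd \emph{closed} walk $W_w$ based at $w$, of length $2t_w+1$, whose $t_w$ even-indexed edges form a sub-multiset of $\{\{e_1,\ldots,e_\ell\}\}$. As every odd closed walk contains an odd cycle, $W_w$ contains an odd cycle $C_w$ with $V(C_w)\subseteq V(W_w)$ and $\ell(C_w)\le 2t_w+1$. The same construction at $y$ gives $W_y$, $t_y$, and an odd cycle $C_y$.

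The crux is to prove that $W_w$ and $W_y$ are non-adjacent, meaning $V(W_w)\cap V(W_y)=\emptyset$ and no edge joins a vertex of $W_w$ to a vertex of $W_y$. Assume the contrary. Since $W_w$ is an alternating odd closed walk based at $w$ (``free'' edges at odd positions, edges of $M$ at even positions), it supplies, between $w$ and any of its vertices, alternating sub-walks of both parities; likewise for $W_y$. I would then splice a sub-walk of $W_w$ from $w$ to the point of contact, the joining edge (when the contact is an edge rather than a shared vertex), and a sub-walk of $W_y$ to $y$, choosing the directions and parities so that the joining edge occupies an odd position and the resulting $w$--$y$ walk is odd with all its even-indexed edges still among the $e_i$. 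Because $wy\mid N$, the converse repacking again yields $\mathbf{u}\in I^{\ell+1}$, the required contradiction. I expect this parity-and-position alignment under concatenation to be the main technical obstacle; it is exactly the bookkeeping performed in \cite[Proposition~6]{CHL}, and it is what drives the two cycles apart.

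With non-adjacency in hand, $C_w$ and $C_y$ are themselves non-adjacent, since their vertices lie in $V(W_w)$ and $V(W_y)$. Moreover, $V(W_w)\cap V(W_y)=\emptyset$ means the edges of $M$ used at even positions of $W_w$ and of $W_y$ involve disjoint vertex sets, hence are disjoint sub-multisets of $\{\{e_1,\ldots,e_\ell\}\}$; therefore $t_w+t_y\le\ell$. Consequently $\ell(C_w)+\ell(C_y)\le(2t_w+1)+(2t_y+1)=2(t_w+t_y)+2\le 2\ell+2\le 2n$, so $C_w$ and $C_y$ are two non-adjacent odd cycles of total length at most $2n$, contradicting the hypothesis. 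Hence $|N|\le 1$ for every good decomposition.
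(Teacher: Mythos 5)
Your outline tracks the paper's strategy closely---contradiction, squarefreeness of $N$ via Proposition~\ref{squarefree}, two odd closed walks based at the two vertices dividing $N$ obtained from Lemma~\ref{connectedness}, and a splice-and-repack argument---but the step you defer is precisely the technical heart of the paper's proof, and your diagnosis of the obstacle is off. You say the difficulty in the ``walks touch'' case is parity-and-position alignment; in fact parity is harmless (an odd closed walk yields alternating sub-walks of both parities between its base point and any of its vertices, by going around either way). The genuine obstruction is multiset multiplicity: when $V(W_w)\cap V(W_y)\neq\emptyset$, the even-position edges of your spliced $w$--$y$ walk are drawn from both $\{\{f_1,\ldots,f_{t_w}\}\}$ and $\{\{g_1,\ldots,g_{t_y}\}\}$, and a factor $e_i$ occurring only once in $M$ may be demanded by both portions; then the combined even-position edges fail to form a sub-multiset of $\{\{e_1,\ldots,e_\ell\}\}$, and your repacking converse no longer produces a product of $\ell+1$ edges dividing $M\cdot wy$. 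The paper organizes the case split by this overlap rather than by adjacency: it sets $F=\{\{f_1,\ldots,f_k\}\}\cap\{\{g_1,\ldots,g_\ell\}\}$, and when $F\neq\emptyset$ it uses no adjacency at all, instead dividing out $\prod_{e\in F}e$, choosing $j$ minimal with $f_j\in F$, and splicing the two walks at that common edge so that the leftover common edges land among $f_{j+1},\ldots,f_k$; this exact bookkeeping is what your appeal to \cite[Proposition~6]{CHL} would have to reproduce, and without it your claim that adjacency of the walks forces a contradiction is unproved in the shared-vertex case. Only when $F=\emptyset$ (where vertex-disjointness is automatic in your branch, and your disjoint-multiset count $t_w+t_y\le\ell$ is correct) does the paper invoke the adjacency hypothesis, splicing through a connecting edge or a common vertex exactly as you propose.

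A smaller point that your explicit computation makes visible: you obtain non-adjacent odd cycles with $\ell(C_w)+\ell(C_y)\le 2(t_w+t_y)+2\le 2n$, which does not literally contradict the hypothesis as stated, since it only constrains pairs of total length at most $n$. The paper's own case $F=\emptyset$ silently uses the same bound (the two walks borrow $k+\ell\le n-1$ edges of $M$, so the extracted cycles have total length at most $2n$), and consistency with Proposition~\ref{2n} (two non-adjacent triangles, total length $6$, already relevant for $n=3$) indicates the intended threshold in the statement is $2n$ rather than $n$. So on this point you match the paper, discrepancy included; the genuine gap is the unjustified splice in the overlapping case, which is exactly where the paper does its hardest work.
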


\begin{proof}  Assume on the contrary that $|N|\geq 2$. Then, by Proposition~\ref{squarefree}, there exists $x\neq y\in V(G)$ such that $xy$ divides $N$.  Write $M$ as $M=e_1e_2\cdots e_{m-1}$ for some $m\leq n$, where $e_i\in E(G)$ for all $i$.

 By a similar  argument as in Proposition~\ref{squarefree}, we see that
 there exist odd closed walks $C_1: x=x_1-x_2-x_3-\cdots-x_{2k}-x_{2k+1}-x_1=x$ and $C_2: y=y_1-y_2-\cdots -y_{2\ell}-y_{2\ell+1}-y_1=y$ such that
$$\{\{f_1,\ldots,f_{k}\}\}\subseteq \{\{e_1,\ldots,e_{m-1}\}\}$$ and $$\{\{g_1,\ldots,g_{\ell}\}\} \subseteq \{\{e_1,\ldots,e_{m-1}\}\},$$
where $f_i=x_{2i}x_{2i+1}$  for $i=1,\ldots,k$ and $g_i=y_{2i}y_{2i+1}$ for $i=1,\ldots,\ell$.

Denote by $F$ the multi-set $\{\{f_1,\ldots,f_{k}\}\}\cap \{\{g_1,\ldots,g_{\ell}\}\}$. We firstly  assume that  $F\neq \emptyset$. Then $$\mathbf{u}_1:=\frac{xyf_1\cdots f_k g_1\cdots g_{\ell}}{\prod_{e\in F}e} \mbox{\quad divides \quad} \mathbf{u},$$  and moreover, $$\mathrm{order}_I(\frac{\mathbf{u}}{\mathbf{u}_1})\geq m-1-(k+\ell-|F|).$$  We now show  $\mathrm{order}_I({\mathbf{u}_1})= (k+\ell-|F|)+1.$ Let $j$ be the minimal of $i$ with $f_i\in F$ and let $1\leq t\leq \ell$ such that $f_j=g_t$. Then either $x_{2j}=y_{2t+1}$ and $x_{2j+1}=y_{2t}$ or $x_{2j}=y_{2t}$ and $x_{2j+1}=y_{2t+1}$. Suppose that $x_{2j}=y_{2t+1}$ and $x_{2j+1}=y_{2t}$. In this case, $\mathbf{u}_1$ could be written as
 \begin{equation*}\begin{split}\mathbf{u}_1=\prod_{i=0}^{j-1}( x_{2i+1}x_{2i+2})\cdot (x_{2s+1}y_{2t-1})\cdot\qquad \qquad \qquad \\ \prod_{i=1}^{t-1}(y_{2i-1}y_{2i})\cdot  \prod_{i=t+1}^{\ell}(y_{2i}y_{2i+1})\cdot\frac{\prod_{i=s+1}^k f_i} {\prod_{e\in F\setminus \{\{f_j\}\}}e}.\end{split}\end{equation*}
By the choice of $j$, we see that $F\setminus \{\{f_j\}\} \subseteq \{\{f_{j+1},\cdots,f_k\}\}$. From this it follows that $\mathbf{u}_1$ is a product of  edges and $\mathrm{order}_I({\mathbf{u}_1})= (k+\ell-|F|)+1.$  From this it follows that $\mathrm{order}_I(\mathbf{u})\geq \mathrm{order}_I(\frac{\mathbf{u}}{\mathbf{u_1}})+ \mathrm{order}_I(\mathbf{u_1})\geq m$, a contradiction.   The case that $x_{2j}=y_{2t}$ and $x_{2j}=y_{2t+1}$  yields a contradiction similarly.

Secondly, we assume that  $F=\emptyset$. In this case we put $$\mathbf{u}_2:=xf_1\cdots f_{k}yg_1\cdots g_{\ell}=x_1x_2\cdots x_{2k+1}y_1y_2\cdots y_{2\ell+1}.$$
If $V(C_1)\cap V(C_2)=\emptyset$, there exist some vertex of $C_1$, say $x_1$, and some vertex of $C_2$, say $y_1$, such that $x_1y_1\in E(G)$,  since $C_1$ is adjacent to $C_2$. (Here, we use the observation that every odd closed walk contains an odd cycle). Thus,
$$\mathbf{u}_2=(x_1y_1) (x_2x_3) \cdots (x_{2k}x_{2k+1}) (y_2y_3) \cdots (y_{2k}y_{2k+1})\in I^{k+\ell+1}.$$
If $V(C_1)\cap V(C_2)\neq \emptyset$, we may harmlessly assume that $x_1=y_1$. Then
$$\mathbf{u}_2=(x_1y_{2\ell +1}) (x_2x_3) \cdots (x_{2k}x_{2k+1}) (y_1y_2) \cdots (y_{2k-1}y_{2k})\in I^{k+\ell+1}.$$
In both cases, we have $\mathrm{order}_I(\mathbf{u}_2)\geq k+\ell +1$. Note that $\frac{\mathbf{u}}{\mathbf{u}_2}$ contains the product of $(m-1-k-\ell)$ edges, it follows that $\mathrm{order}_I(\mathbf{u})\geq \mathrm{order}_I(\mathbf{u}_2)+\mathrm{order}_I(\frac{\mathbf{u}}{\mathbf{u}_2})\geq m$. This is a contradiction again.
  \end{proof}

\begin{Corollary} Let $G$ be a simple graph with $\alpha(G)=2$ such that  any two odd cycles $C_1,C_2$ of $G$ with $\ell(C_1)+\ell(C_2)\leq n$ are adjacent. Denote $I(G)$ by $I$.  Then $a_0(S/I^n)\in \{-\infty, 2n-1\}.$
\end{Corollary}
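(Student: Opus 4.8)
The plan is to combine the trichotomy of Proposition~\ref{three} with the structural constraint of Proposition~\ref{cycle}, the only new task being to rule out the value $2n$. First I would check that the hypothesis $\alpha(G)=2$ places us exactly in the setting of Proposition~\ref{three}. Indeed, an independent set of $G$ is a clique of the complementary graph $\Delta=\overline{G}$, so $\alpha(G)=2$ says the largest clique of $\Delta$ has size $2$, i.e.\ $\Delta$ is triangle-free, equivalently $\mathrm{girth}(\Delta)\geq 4$; moreover $I=I(G)=I_\Delta$ as recorded at the start of this section. Hence Proposition~\ref{three} applies and gives $a_0(S/I^n)\in\{-\infty,2n-1,2n\}$, and it remains only to exclude $2n$.

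Next I would assume $a_0(S/I^n)\neq-\infty$ and pick a $0$-th extremal monomial $\mathbf{u}$ of $I^n$, so that $a_0(S/I^n)=\deg(\mathbf{u})$ by the definition following Lemma~\ref{fun}. Fix a good decomposition $\mathbf{u}=MN$ with respect to $I$. Since $M$ is a minimal generator of $I^{\mathrm{order}_I(\mathbf{u})}$ and $I$ is generated in degree $2$, we have $\deg(M)=2\,\mathrm{order}_I(\mathbf{u})$; by Proposition~\ref{squarefree}, $N$ is squarefree, so its degree equals $|N|$, and $\mathrm{order}_I(\mathbf{u})\leq n-1$. Consequently $\deg(\mathbf{u})=2\,\mathrm{order}_I(\mathbf{u})+|N|\leq 2(n-1)+|N|$.

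The crucial input is Proposition~\ref{cycle}. The hypothesis of the corollary is precisely that any two odd cycles $C_1,C_2$ of $G$ with $\ell(C_1)+\ell(C_2)\leq n$ are adjacent, so that proposition applies to $\mathbf{u}$ (a $0$-th critical monomial, being extremal) and yields $|N|\leq 1$ for every good decomposition. Inserting this into the degree estimate gives $\deg(\mathbf{u})\leq 2(n-1)+1=2n-1$, whence $a_0(S/I^n)=\deg(\mathbf{u})\leq 2n-1$. Combined with the trichotomy from Proposition~\ref{three}, this forces $a_0(S/I^n)\in\{-\infty,2n-1\}$, as desired.

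Because the substantive work is already contained in Propositions~\ref{three}, \ref{squarefree} and~\ref{cycle}, I do not expect a genuine obstacle here; the corollary is essentially a bookkeeping consequence. The only points requiring care are the translation of $\alpha(G)=2$ into $\mathrm{girth}(\Delta)\geq 4$ so that Proposition~\ref{three} is available, and the identity $\deg(\mathbf{u})=2\,\mathrm{order}_I(\mathbf{u})+|N|$ used in the degree count. I would also note that the statement is understood for $n\geq 2$, as required by Proposition~\ref{cycle}; the case $n=1$ is immediate, since $S/I$ is reduced of positive dimension and therefore $a_0(S/I)=-\infty$.
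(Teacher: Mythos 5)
Your proof is correct and follows essentially the same route as the paper: the paper likewise invokes Proposition~\ref{cycle} to rule out the value $2n$ and then concludes via the trichotomy of Proposition~\ref{three}, with your degree identity $\deg(\mathbf{u})=2\,\mathrm{order}_I(\mathbf{u})+|N|$ merely making explicit the bookkeeping the paper leaves implicit. Your added remarks on the translation $\alpha(G)=2\Leftrightarrow\mathrm{girth}(\Delta)\geq 4$ and on the trivial case $n=1$ are sound but not needed beyond what the paper assumes.
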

\begin{proof} By Proposition~\ref{cycle}, $a_0(S/I^n)\neq 2n$. It follows that   $a_0(S/I^n)\in \{-\infty, 2n-1\}$ by Proposition~\ref{three}.
\end{proof}

 Proposition~\ref{cycle} holds for any  graph even without the restriction $\alpha(G)=2$. Under the assumption that $\alpha(G)=2$, the converse of this proposition is also true.

\begin{Proposition} \label{2n} Let $I$ be the edge ideal of a simple graph $G$ with $\alpha(G)=2$. If there are odd cycles $C_1, C_2$ in $G$ which are not adjacent, then for any $n\geq 3$, there exists a $0$-th critical monomial $\mathbf{u}$ of $I^n$ such that  $\deg(\mathbf{u})=2n$.
\end{Proposition}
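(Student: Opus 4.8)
The plan is to produce an \emph{explicit} $0$-th critical monomial of $I^n$ of degree $2n$, and the first move is a purely structural reduction: I claim the hypotheses force two \emph{non-adjacent triangles} inside $G$. Indeed, fix any vertex $w\in V(C_2)$; for any two distinct $u,v\in V(C_1)$ the three vertices $u,v,w$ are pairwise distinct (as $V(C_1)\cap V(C_2)=\emptyset$), and $uw,vw\notin E(G)$ because $C_1,C_2$ are not adjacent. Since $\alpha(G)=2$, the set $\{u,v,w\}$ cannot be independent, so $uv\in E(G)$. Hence $V(C_1)$ induces a clique, and symmetrically so does $V(C_2)$; as each cycle has at least three vertices, I may select triangles $T_1=\{p,q,r\}\subseteq V(C_1)$ and $T_2=\{x,y,z\}\subseteq V(C_2)$, which remain non-adjacent because there are no edges between $V(C_1)$ and $V(C_2)$. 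From here on I work only with these two non-adjacent triangles, and the oddness of the original cycles is no longer needed.

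Next I would set $\mathbf{u}=p^{n-2}q^{n-2}r\cdot xyz$, a monomial of degree $2(n-2)+1+3=2n$, and verify that it satisfies condition (3) of Lemma~\ref{fun} for the ideal $I^n$, i.e.\ $\sqrt{I^n:\mathbf{u}}=\mathfrak{m}$. This splits into showing $\mathbf{u}\notin I^n$ and $x_i\mathbf{u}\in I^n$ for every variable $x_i$. For the former I compute $\mathrm{order}_I(\mathbf{u})$: the factorization $(pq)^{n-2}\cdot(xy)\cdot r\,z$ uses $n-1$ edges, so $\mathrm{order}_I(\mathbf{u})\geq n-1$; conversely, since $T_1$ and $T_2$ are non-adjacent, every edge dividing $\mathbf{u}$ lies entirely inside $\{p,q,r\}$ or entirely inside $\{x,y,z\}$. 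These two parts of $\mathbf{u}$ have odd degrees $2n-3$ and $3$, so at most $n-2$ and at most $1$ edges can be extracted from them, respectively; hence $\mathrm{order}_I(\mathbf{u})=n-1<n$ and $\mathbf{u}\notin I^n$.

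Finally, to reach every variable I distinguish three cases. If $x_i\in T_1$, a direct factorization lands in $I^n$, e.g.\ $r\mathbf{u}=(pq)^{n-3}(pr)(qr)(xy)\,z$ (a product of $n$ edges), and $p\mathbf{u}=(pq)^{n-2}(pr)(xy)\,z$ similarly. If $x_i\in T_2$, the triangle $T_2$ absorbs the extra vertex, e.g.\ $x\mathbf{u}=(pq)^{n-2}(xy)(xz)\,r$. If $x_i=v$ lies outside $T_1\cup T_2$, I invoke $\alpha(G)=2$ once more: since $p$ and $x$ are non-adjacent, $\{v,p,x\}$ is not independent, so $v$ is adjacent to $p$ or to $x$; in the first case $v\mathbf{u}=(vp)(pq)^{n-3}(qr)(xy)\,z\in I^n$, and the second case is symmetric using $T_2$. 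Together with $\mathbf{u}\notin I^n$ this gives $\mathfrak{m}\subseteq\sqrt{I^n:\mathbf{u}}\subsetneq S$, hence equality, so $\mathbf{u}$ is a $0$-th critical monomial of $I^n$ with $\deg(\mathbf{u})=2n$.

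The conceptual heart, and the step I expect to be the main obstacle, is calibrating the exponents so that $\deg(\mathbf{u})=2n$ while simultaneously keeping $\mathrm{order}_I(\mathbf{u})=n-1$: the monomial must be just large enough to sit immediately below $I^n$ yet be pushed into $I^n$ by multiplication by \emph{any} single variable. What makes this balance achievable is precisely the parity obstruction supplied by the two odd (here, triangle) cliques together with their non-adjacency, which prevents the odd-degree blocks from ever being fully matched into edges; verifying the outside-vertex case is then the only place where the hypothesis $\alpha(G)=2$ is genuinely reused, and it is routine once the structural reduction is in hand.
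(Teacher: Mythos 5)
Your proof is correct and follows essentially the same route as the paper: both arguments reduce the two non-adjacent odd cycles to two non-adjacent triangles and then verify that the very same monomial $x_1^{n-2}x_2^{n-2}x_3\cdot x_4x_5x_6$ (your $p^{n-2}q^{n-2}r\,xyz$) is a $0$-th critical monomial of $I^n$, via the identical case analysis on where the multiplying variable lives. The only difference is cosmetic, in the reduction step: the paper passes to induced odd cycles (necessarily of length $3$ or $5$ when $\alpha(G)=2$) and rules out pentagons by a domination observation, whereas you show directly that each $V(C_i)$ induces a clique by using a vertex of the other cycle as the third point of a would-be independent triple --- a slightly slicker variant of the same idea.
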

\begin{proof} For every odd cycle $C$ there exists an induced odd cycle  $C'$ with $V(C')\subseteq V(C)$. By this fact we may assume that $C_1$ and $C_2$ are induced odd cycles, which are either of length 5 or of length 3, since  $\alpha(G)=2$. Note that if $C$ is a cycle of length 5, then every vertex not in $V(C)$ is adjacent to at least one vertex of $C$. Hence  $C_1$ and $C_2$ have to be triangles, say, with  $V(C_1)=\{1,2,3\}$  and $V(C_2)=\{4,5,6\}$.

 Put $\mathbf{u}:=x_1^{n-2}x_2^{n-2}x_3x_4x_5x_6$. Since there is no edges $x_ix_j$ with $i\in \{1,2,3\}$ and $j\in \{4,5,6\}$, it is easy to see that $\mathrm{order}_I(\mathbf{u})=n-1$. We claim that $\mathrm{order}_I(x_i\mathbf{u})=n$ for every  $i\in [s]$, and then $\mathbf{a}$ is a $0$-th critical vector of $I^n.$ In fact, if $i\in [6]$, say $i=1$, then $x_i\mathbf{u}=(x_1x_3)(x_1x_2)^{n-1}(x_4x_5)x_6\in I^n$; If  $i\in [s]\setminus [6]$, then $i$ is adjacent to at least one of vertices $3,4$, say $3$. From this it follows that   $x_i\mathbf{u}=(x_ix_3)(x_1x_2)^{n-1}(x_4x_5)x_6\in I^n$. This proves our claim and the result follows.
\end{proof}
The situation described in Proposition~\ref{2n} does exist as the following example shown.

\begin{Example} \label{G3G4G5}\em  Let $G$ be the  graph $\mathbb{G}_3$ in Figure~\ref{g2}. Then $\alpha(G)=2$  and  $C_1$ is not adjacent to $C_2$, where $C_1$ and $C_2$ are the triangles  on the vertices $1,2,3$, and  the vertices $4,5,6$, respectively. Conversely, every graph satisfying the conditions in Proposition~\ref{2n} contains $\mathbb{G}_3$ as a subgraph. More precisely, a connected graph $G$ with $\alpha(G)\leq 2$ has two non-adjacent odd cycles if and only if it contains an induced subgraph which is isomorphic to one of the graphs: $\mathbb{G}_3$, $\mathbb{G}_4$, $\mathbb{G}_5$, where $\mathbb{G}_4:=\mathbb{G}_3+\{2,7\}$, and  $\mathbb{G}_5:=\mathbb{G}_3+\{2,7\}+\{1,7\}$. To prove this, let $C_1,C_2$ be non-adjacent triangles of $G$, say,  with vertex sets $V(C_1)=\{1,2,3\}$ and  $V(C_2)=\{4,5,6\}$. Since $G$ is connected, there is a path connecting a vertex in $V(C_1)$ to a vertex in $V(C_2)$. We may assume that this path is $3=z_0-z_1-\cdots-z_k-z_{k+1}=4$. It is clear that $k\geq 1$, and, for $i\in [k]$, $z_i$ is adjacent to all vertices of either $C_1$ or $C_2$. We now use induction on $k$. If $k=1$, then the subgraph of $G$ induced on $[6]\cup\{z_1\}$ is  isomorphic to one graph in $\mathbb{G}_3, \mathbb{G}_4,\mathbb{G}_5$,  and thus it is what we require. Suppose that  $k>1$. If $z_k$ is adjacent to all vertices of $C_1$, then the subgraph of $G$ induced on $[6]\cup\{z_k\}$ meets our requirement.  If $z_k$ is adjacent to all vertices of $C_2$, then we consider the triangle $C_1$ and the triangle  on vertices $z_k, 5,6$, which we denote by $C_3$. Note that there is a path $3=z_0-z_1-\cdots-z_k$ connecting $C_1$ and $C_3$, the conclusion follows from the induction.

\begin{figure}[ht!]

\begin{tikzpicture}[line cap=round,line join=round,>=triangle 45,x=1.5cm,y=1.5cm]

\draw (3.7,4)--(3.7,6)--(4.7,5)--(5.4,5)--(6.1,5)--(7.1,6); \draw (6.1,5)--(7.1,4)--(7.1,6); \draw (3.7,4)--(4.7,5); \draw (5.4, 5)--(7.1, 4);  \draw (5.4, 5)--(7.1, 6);

\draw (3.9,4) node[anchor=north east]{1};
\draw (4.9,4.8) node[anchor=north east]{3};
\draw (4.1,6.2) node[anchor=north east]{2};

\draw (5.5,4.8) node[anchor=north east]{7};
\draw (6.1,5.2) node[anchor=north west]{4};
\draw (7.1,6.2) node[anchor=north west]{5};
\draw (7.1,4) node[anchor=north east]{6};

\fill [color=black] (3.7,6) circle (1.5pt);
\fill [color=black] (3.7,4) circle (1.5pt);
\fill [color=black] (4.7,5) circle (1.5pt);

\fill [color=black] (5.4,5) circle (1.5pt);
\fill [color=black] (6.1,5) circle (1.5pt);
\fill [color=black] (7.1,6) circle (1.5pt);

\fill [color=black] (7.1,4) circle (1.5pt);

\draw (3,5) node[anchor=north east]{$ \mathbb{G}_3:$};

\draw (11,6) node[anchor=north east]{$ \mathbb{G}_4:=\mathbb{G}_3+\{2,7\}$};

\draw (11,4.5) node[anchor=north east]{$ \mathbb{G}_5:=\mathbb{G}_4+\{1,7\}$};

\end{tikzpicture}
\caption{}\label{g2}
\end{figure}
\end{Example}

We summarize the previous results in the following theorem.

\begin{Theorem}  Let $I$ be the Stanley-Reisner ideal  of an one-dimensional simplicial complex $\Delta$ with $\mbox{girth}(\Delta)\geq 4$.
Denote by $G$ the complementary graph of $\Delta$. Assume that $G$ is connected.
\begin{enumerate}
  \item For $n=2$, $a_0(S/I^n)=3$ if  there is a dominating triangle in $G$ and it is $-\infty$ otherwise;

  \item  For $n\geq 3$, the following statements are equivalent.

{\em (a)} $a_0(S/I^n)=2n$;

{\em (b)} $G$ contains two odd cycles which are not adjacent;

{\em (c)} $G$ contains an induced subgraph which is isomorphic to one graph of $\{\mathbb{G}_3, \mathbb{G}_4, \mathbb{G}_5\}$.

\item  If the equivalent conditions in {\em (2)} fails, then $a_0(S/I^n)=2n-1$ for $n\geq 3$.
\end{enumerate}
\end{Theorem}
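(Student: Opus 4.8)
The plan is to prove the Theorem by assembling the Propositions and Examples already established, treating each of the three claims in turn. The central tool throughout is Takayama's lemma specialized to $i=0$ via Lemma~\ref{fun}: the relevant invariant $a_0(S/I^n)$ equals the degree of a $0$-th extremal monomial, so the whole problem reduces to understanding when $0$-th critical monomials exist and what their maximal degree can be. The structural facts I would invoke are: Proposition~\ref{three}, which confines $a_0(S/I^n)$ to $\{-\infty,2n-1,2n\}$ once $\mathrm{girth}(\Delta)\geq 4$ (equivalently $\alpha(G)=2$); the depth analysis (Lemma~\ref{G1G2} together with the $|V(G)|\geq 6$ Proposition and the small-vertex Examples) governing when the invariant is $-\infty$ versus finite; and the pair Proposition~\ref{cycle}/Proposition~\ref{2n}, which pin down exactly when the value $2n$ is achieved.

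For claim (1), the case $n=2$, I would argue directly. Since $G$ is connected with $\alpha(G)=2$, the relevant $0$-th critical monomials of $I^2$ correspond to depth being zero, and by \cite[Theorem 3.1]{HH1} (quoted as Formula~(Formu.1)) depth vanishes at $n=2$ precisely when $G$ has a dominating triangle. When such a triangle exists I would exhibit the extremal monomial explicitly — the natural candidate is $x_1x_2x_3$ for a dominating triangle on $\{1,2,3\}$, whose degree is $3=2n-1$ — and check using the dominating property that $x_i(x_1x_2x_3)\in I^2$ for every vertex $i$, so it is $0$-th critical; combined with Proposition~\ref{three} this forces $a_0(S/I^2)=3$. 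If there is no dominating triangle, depth is positive, hence $H^0_{\mathfrak m}(S/I^2)=0$ and $a_0(S/I^2)=-\infty$.

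For claim (2), the equivalence of (a), (b), (c) for $n\geq 3$, the implication (b)$\Rightarrow$(a) is exactly Proposition~\ref{2n}, which produces a $0$-th critical monomial of degree $2n$ from two non-adjacent odd cycles; with Proposition~\ref{three} this gives $a_0(S/I^n)=2n$. The equivalence (b)$\Leftrightarrow$(c) is the content of Example~\ref{G3G4G5}, where it is shown that under $\alpha(G)\le 2$ and connectedness, possessing two non-adjacent odd cycles is the same as containing one of $\mathbb{G}_3,\mathbb{G}_4,\mathbb{G}_5$ as an induced subgraph. It remains to prove (a)$\Rightarrow$(b), which I would handle by contraposition: if $G$ contains no two non-adjacent odd cycles, then any two odd cycles are adjacent, so a fortiori any two odd cycles $C_1,C_2$ with $\ell(C_1)+\ell(C_2)\le n$ are adjacent, and Proposition~\ref{cycle} forces $|N|\le 1$ for every good decomposition of every $0$-th critical monomial, whence $\deg(\mathbf u)\le 2n-1$ and $a_0(S/I^n)\ne 2n$.

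Claim (3) then follows by elimination: for $n\ge 3$ with $G$ connected and $\alpha(G)=2$ and $|V(G)|\ge 6$ (the relevant regime, the smaller cases being covered by the Examples), depth is zero, so $a_0(S/I^n)\ne-\infty$; if additionally the conditions of (2) fail, then $a_0(S/I^n)\ne 2n$, and Proposition~\ref{three} leaves only $a_0(S/I^n)=2n-1$. The main subtlety I anticipate is the bookkeeping at the boundary between the ``no two non-adjacent odd cycles'' hypothesis of (3) and the precise numerical hypothesis ``$\ell(C_1)+\ell(C_2)\le n$'' needed to apply Proposition~\ref{cycle}; since $\alpha(G)=2$ forces every induced odd cycle to have length $3$ or $5$, the worst relevant sum is $\ell(C_1)+\ell(C_2)\le 6\le n$ for $n\ge 6$, but for the remaining small $n$ one must verify that non-adjacency of the two \emph{induced} short cycles is enough to invoke Proposition~\ref{cycle}, and that passing from an arbitrary odd closed walk to an induced odd cycle (as used in Proposition~\ref{2n}) does not inflate the length past the threshold. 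Confirming that these length bounds interlock correctly is the one genuinely delicate point; everything else is a direct citation of the preceding results.
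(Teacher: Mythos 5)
Your assembly follows the paper's own route almost exactly: the paper offers no separate argument for this theorem beyond the sentence ``We summarize the previous results,'' and the intended proof is precisely the citation chain you give --- Proposition~\ref{three} for the value set, \cite[Theorem 3.1]{HH1} and the depth results for the $-\infty$ dichotomy, Proposition~\ref{2n} and Example~\ref{G3G4G5} for (b)$\Rightarrow$(a) and (b)$\Leftrightarrow$(c), and Proposition~\ref{cycle} (via contraposition) for (a)$\Rightarrow$(b) and claim (3). There is, however, one genuine gap, in your treatment of claim (1). Exhibiting the critical monomial $x_1x_2x_3$ shows $a_0(S/I^2)\geq 3$, and Proposition~\ref{three} then leaves $a_0(S/I^2)\in\{3,4\}$; it does \emph{not} force the value $3$, contrary to your assertion. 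The missing step is the Corollary to Proposition~\ref{cycle}: since every odd cycle has length at least $3$, the hypothesis ``any two odd cycles $C_1,C_2$ with $\ell(C_1)+\ell(C_2)\leq n$ are adjacent'' is vacuously satisfied when $n=2$, so $a_0(S/I^2)\in\{-\infty,2n-1\}=\{-\infty,3\}$ unconditionally, and this is what rules out $4$. Without it your argument for (1) is incomplete.

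By contrast, the point you flag as ``genuinely delicate'' is not delicate at all, and your worry reveals a misreading of the quantifier in Proposition~\ref{cycle}: the negation of (b) says that \emph{all} pairs of odd cycles of $G$ are adjacent, which implies the length-restricted hypothesis of Proposition~\ref{cycle} for every $n\geq 2$ a fortiori (the restriction $\ell(C_1)+\ell(C_2)\leq n$ only weakens the hypothesis, never strengthens it), exactly as your own ``a fortiori'' sentence already says; and the passage from odd closed walks to induced odd cycles is carried out inside the proofs of Propositions~\ref{cycle} and \ref{2n}, so no further length bookkeeping is needed. One last caveat, which you inherit from the paper rather than introduce: your parenthetical ``the smaller cases being covered by the Examples'' in claim (3) glosses over the fact that for $G\in\{\mathbb{C}_4,\mathbb{P}_3\}$ (connected, $\alpha(G)=2$, no odd cycles, so the conditions in (2) fail) the very Examples you cite show $\mathrm{depth}(S/I(G)^n)>0$ for all $n$, hence $a_0=-\infty$ rather than $2n-1$; the theorem must implicitly be read with $G$ containing a triangle (e.g.\ $|V(G)|\geq 6$, or the $\mathbb{C}_5$-free triangle cases listed in the small-vertex Example), and your write-up, like the paper's summary, should say so explicitly.
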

We note  that the case when $G$ is disconnected has been solved in Example~\ref{disconnected}.
\section{when $\mathrm{girth}(\Delta)=3$}

In this section we try to understand $a_0(S/I_{\Delta}^n)$ when $\mathrm{girth}(\Delta)=3$. We show that $a_0(S/I_{\Delta}^n)\leq 3n-1$ and characterize simplicial complexes $\Delta$ for which $a_0(S/I_{\Delta}^n)=3n-1$ or $3n-2$. Some examples $\Delta$ with $a_0(S/I_{\Delta}^n)=-\infty$ for all $n\geq 0$ are given.

\begin{Remark} \label{degree} \em  It is known from the proof of \cite[Theorem 1]{Ta} (see also \cite[Remark 2.9]{MV}) that if $\mathbf{a}$ is a $d$th critical vector of $I$ for some $d\geq 0$, then $a_i\leq \rho_i(I)$ for all $i\in [s]$.   Here $\rho_i(I)=\max\{\deg_i(\mathbf{u}): \mathbf{u} \mbox{ is a minimal monomial generator of }   I\}$.
\end{Remark}

\end{Definition}

Let $\mathbf{u}=x^{\mathbf{a}}$  be a monomial and $V$ a subset $V\subseteq [s]$. We define $\deg(\mathbf{u}):=a_1+\cdots+a_s$ and $\deg_V(\mathbf{u}):=\sum_{i\in V}a_i$. If $V=\{i\}$ for some $i\in [s]$, we write $\deg_i(\mathbf{u})$ for $\deg_V(\mathbf{u})(=a_i)$. As usual, $\mathrm{supp}(\mathbf{u}):=\{i\in [s]\:\; \deg_i(\mathbf{u})\neq 0\}$. For a monomial ideal $I$,  let $I_V$ denote the ideal $(\mathbf{u}\in G(I)\:\; \mathrm{supp}(\mathbf{u})\subseteq V)$. It is easy to see that if $\mathbf{u}$ is a monomial with  $\mathrm{supp}(\mathbf{u})\subseteq V$, $\mathbf{u}\in I^n$ if and only if $\mathbf{u}\in I_V^n$, for each $n\geq1$.

\begin{Proposition} \label{3.1}  Let $\Delta$ be an one-dimensional simplicial complex  with $\mathrm{girth}(\Delta)=3$ and $V(\Delta)\geq 4$. Then
 \begin{enumerate}
\item $a_0(S/I_{\Delta}^n)\leq 3n-1$;

\item  $a_0(S/I_{\Delta}^n)= 3n-1$ if and only if $\mathrm{clique}(\Delta)\geq 4$.
\end{enumerate}

\end{Proposition}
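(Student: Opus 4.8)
The plan is to describe every $0$-th extremal monomial of $I=I_\Delta^n$ through a \emph{socle reformulation}, and to reduce its shape via a good decomposition. The first observation I would record is that adding a single variable to a monomial raises its $I$-order by at most $1$: if $x_v\mathbf{u}\in I^{\ell+1}$ and $x_v\mid g_1$ in a factorization $x_v\mathbf{u}=g_1\cdots g_{\ell+1}$, then $g_2\cdots g_{\ell+1}\mid\mathbf{u}$, so $\mathrm{order}_I(\mathbf{u})\geq\ell$. Using this twice gives the two reductions I need for an extremal $\mathbf{u}$: (a) $\mathrm{order}_I(\mathbf{u})=n-1$, and (b) $x_v\mathbf{u}\in I^n$ for \emph{every} $v\in[s]$ (otherwise $x_v\mathbf{u}$ would again be $0$-th critical, as its order is $<n$ and $\sqrt{I^n:x_v\mathbf{u}}\supseteq\sqrt{I^n:\mathbf{u}}=\mathfrak{m}$, contradicting extremality). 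Remark~\ref{degree} additionally gives $\deg_i(\mathbf{u})\leq n$ for all $i$, and one checks $\mathbf{u}\in I_\Delta^{(n)}$ since $S/I_\Delta^{(n)}$ has positive depth and hence admits no $0$-th critical monomials.

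Next I would fix a good decomposition $\mathbf{u}=MN$ with respect to $I_\Delta$, so that $M$ is a minimal generator of $I_\Delta^{\,n-1}$ and $\deg M\leq 3(n-1)$. The key structural claim is that $\mathrm{supp}(N)$ is a clique of $\Delta$ of size at most $2$: if two vertices of $\mathrm{supp}(N)$ were non-adjacent, or three of them spanned a triangle, then $N$ would be divisible by a generator of $I_\Delta$ (a non-edge $x_cx_d$, or a triangle $x_cx_dx_e$), and then $\mathbf{u}=M\cdot(\text{that generator})\cdot(N/\text{it})\in I_\Delta^{\,n}$, a contradiction. Since a clique containing no triangle has at most two vertices, $\mathrm{supp}(N)\subseteq\{a,b\}$ for an edge (or single vertex) $\{a,b\}$.

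For part (1) it then suffices to prove $\deg N\leq 2$, since then $\deg\mathbf{u}=\deg M+\deg N\leq 3(n-1)+2=3n-1$. Writing $N=x_a^{p}x_b^{q}$, I would feed the socle conditions $x_a\mathbf{u},x_b\mathbf{u}\in I_\Delta^{\,n}$ back into $M$: any packing of $x_a\mathbf{u}$ into $n$ generators must consume \emph{every} copy of $x_a$ (else it would already pack $\mathbf{u}$ into $n$ generators), so $a$ lies in exactly $\deg_a(\mathbf{u})+1$ of them, each using a distinct partner from the edge $\{a,b\}$; because generators have degree $\leq 3$ and $\deg_i(\mathbf{u})\leq n$, concentrating degree $\geq 3$ on $\{a,b\}$ forces some vertex of $\mathrm{supp}(M)$ to become impossible to saturate, violating \emph{its} socle condition. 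I expect this multiplicity estimate, which rules out $N=x_a^3$, $x_a^2x_b$, and so on, to be the main obstacle: it is exactly where one must combine the one-dimensionality of $\Delta$ with the socle conditions at \emph{all} vertices, and it is the girth-$3$ analogue of Proposition~\ref{squarefree}.

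For the ``if'' direction of part (2), assuming $\mathbb{K}_4\subseteq\Delta$ on $\{1,2,3,4\}$, I would take $\mathbf{u}=(x_1x_2x_3)^{n-1}x_4^{2}$, of degree $3n-1$. As $\mathrm{supp}(\mathbf{u})\subseteq\mathbb{K}_4$, where $I_\Delta$ restricts to the ideal of all squarefree cubics, $\mathbf{u}\notin I_\Delta^{\,n}$ (degree $<3n$) while $\mathrm{order}_I(\mathbf{u})=n-1$; then $x_v\mathbf{u}\in I_\Delta^{\,n}$ for every $v$ by explicit decompositions (directly for $v\in\{1,2,3,4\}$; via the non-edge $x_vx_j$ for an external $v$ non-adjacent to some $j\in\{1,2,3,4\}$; and via a triangle $x_vx_ix_j$ for an external $v$ adjacent to all of them), so $a_0(S/I_\Delta^{\,n})=3n-1$ by part (1). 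For the ``only if'' direction, $\deg\mathbf{u}=3n-1$ forces $\deg M=3(n-1)$, i.e.\ all $n-1$ generators of $M$ are triangles, together with $\deg N=2$; analyzing how $N$ must interlock with these triangles under the socle conditions then yields four pairwise-adjacent vertices, a $\mathbb{K}_4$. This extraction, like the multiplicity bound of part (1), is the technical heart of the argument, while the reductions and the clique-support step are routine bookkeeping.
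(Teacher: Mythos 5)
You put your finger on the right pressure point but did not relieve it, and in fact the lemma you defer --- that $\deg N\le 2$ for a good decomposition of a $0$-th extremal monomial --- is false as stated. Take $\Delta=\mathbb{F}_2$ (Notation~\ref{F_k}): vertex set $[5]$, non-edges $\{1,2\},\{3,5\},\{4,5\}$, so $I=(x_1x_2,\,x_3x_5,\,x_4x_5,\,x_1x_3x_4,\,x_2x_3x_4)$, $\mathrm{girth}(\Delta)=3$, $\mathrm{clique}(\Delta)=3$. For $n=3$ the monomial $\mathbf{u}=x_1x_2x_3^2x_4^2x_5$ is exactly the critical monomial constructed in the proof of Theorem~\ref{3n-2} (with $k=2$, $a_1=a_2=1$), and since $a_0(S/I^3)=3n-2=7=\deg\mathbf{u}$, it is extremal, with $\mathrm{order}_I(\mathbf{u})=2$. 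But $M=(x_1x_2)(x_3x_5)$ is a minimal generator of $I^2$ dividing $\mathbf{u}$ (every generator of $I^2$ has degree at least $4$, so nothing strictly divides $M$), giving the good decomposition $\mathbf{u}=MN$ with $N=x_3x_4^2$ of degree $3$. Note $\mathrm{supp}(N)=\{3,4\}$ is an edge, so your clique-support step survives; it is the degree bound that breaks. The failure mechanism is precisely the coexistence of degree-$2$ and degree-$3$ generators: $\deg M$ can drop below $3(n-1)$ while $\deg N$ grows, so $\deg\mathbf{u}$ cannot be controlled by bounding $\deg M$ and $\deg N$ separately --- you would need the correlated estimate $\deg N\le 2+\bigl(3(n-1)-\deg M\bigr)$, which is a genuine packing statement about $I_\Delta$, not a consequence of the socle conditions at individual vertices. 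The same issue undermines your ``only if'' direction of (2): $\deg\mathbf{u}=3n-1$ does not force $\deg M=3(n-1)$ and $\deg N=2$, and the extraction of a $\mathbb{K}_4$ from ``interlocking triangles'' is never carried out. (A smaller slip: you quote Remark~\ref{degree} as giving $\deg_i(\mathbf{u})\le n$; what Takayama's vanishing gives for $I_\Delta^n$, and what the argument below needs, is $\deg_i(\mathbf{u})\le n-1$ --- with only $\le n$ one cannot even conclude $|\mathrm{supp}(\mathbf{u})|\ge 4$ from $\deg\mathbf{u}=3n-1$.)

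The paper sidesteps all of this with one external input, \cite[Lemma 3.1]{MV}: if $\deg(\mathbf{v})\ge 3m$ and $\deg_i(\mathbf{v})\le m$ for all $i$, then $\mathbf{v}\in I_\Delta^m$. Part (1) is then immediate: an extremal $\mathbf{u}$ has $\deg_i(\mathbf{u})\le n-1$, so $\deg(\mathbf{u})\ge 3n$ would put $\mathbf{u}\in I^n$, contradicting criticality. For the converse of (2), $\deg(\mathbf{u})=3n-1$ together with $\deg_i(\mathbf{u})\le n-1$ forces $|\mathrm{supp}(\mathbf{u})|\ge 4$; if $\mathrm{clique}(\Delta)\le 3$ the support contains a non-edge, say $\{1,2\}$, and $\mathbf{v}=\mathbf{u}/(x_1x_2)$ satisfies $\deg(\mathbf{v})=3(n-1)$ and $\deg_i(\mathbf{v})\le n-1$, whence $\mathbf{v}\in I^{n-1}$ and $\mathbf{u}\in I^n$, a contradiction. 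Your preliminary reductions (order $n-1$, socle condition at every vertex, clique support of $N$) are correct, and your witness for the ``if'' direction, $(x_1x_2x_3)^{n-1}x_4^2$, is literally the paper's monomial $x_1^{n-1}x_2^{n-1}x_3^{n-1}x_4^2$; but to complete your route you would in effect have to reprove the Minh--Vu packing lemma, since the per-decomposition degree bound you propose in its place does not hold.
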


\begin{proof}

(1) Let  $x^{\mathbf{a}}$ be a $0$-th  extremal  monomial of $I^n$. Then $a_i\leq n-1$  for all $i\in [s]$ by Remark~\ref{degree}. If $|\mathbf{a}|\geq 3n$, then $x^{\mathbf{a}}\in I^n$ by \cite[Lemma 3.1]{MV}. This implies $a_0(S/I_{\Delta}^n)\leq 3n-1$.

(2)  If $\mathrm{clique}(\Delta)\geq 4$, we may assume the vertices $1,2,3,4$ forms a clique of $\Delta$. Set $\mathbf{u}=x_1^{n-1}x_2^{n-1}x_3^{n-1}x_4^2$. Then, since $I_{[4]}$ is generated in degree 3, $\mathbf{u}\notin I^{n}$. However, $x_i\mathbf{u}\in I^n$ for all $i\in [s]$. From this, it follows that $\mathbf{u}$ is a $0$-th critical vector of $I^n$ and so $a_0(S/I_{\Delta}^n)= 3n-1$.

For the proof of the converse, we assume on the contrary that there exists $\Delta$ such that  $a_0(S/I_{\Delta}^n)= 3n-1$ and $\mathrm{clique}(\Delta)\leq 3$.  Let $\mathbf{u}$ be a $0$-th extremal vector of $I^n$. Since $|\mathbf{u}|=3n-1$, it follows that $|\mathrm{supp}(\mathbf{u})|\geq 4$ by  Remark~\ref{degree}. Thus, there exist two vertices in $\mathrm{supp}(\mathbf{u})$, say 1,2,  such that $\{1,2\}$ is not an edge of $\Delta$. Now, set $\mathbf{v}:=\frac{\mathbf{u}}{x_1x_2}$. Then $\deg_i(\mathbf{v})\leq n-1$ for all $i\in[s]$ and so $\mathbf{v}\in I^{n-1}$  by \cite[Lemma 3.1]{MV}. Hence $\mathbf{u}\in I^n$, a contradiction.
\end{proof}

We present examples of simple complexes $\Delta$ for which $a_0(S/I_{\Delta}^n)=-\infty$ for all $n\geq 2$. These examples are necessary  in the proof of Theorem~\ref{3n-2}.

\begin{Example} \label{G_1} \em Let $\Delta$ be the graph $\mathbb{G}_1$ in Figure~\ref{g4}. We show that there exists no $0$-th critical vector of $I_{\Delta}^n$ and so $a_0(R/I_{\Delta}^n)=-\infty$ for all $n\geq 1$.  Assume on the contrary that $\mathbf{u}=x_1^{a_1}x_2^{a_2}x_3^{a_3}x_4^{a_4}$ is a $0$-th critical monomial of $I_{\Delta}^n$ for some $n>0$. Note that $I_{\Delta}=(x_1x_2x_3, x_2x_4,x_1x_4)$, we may assume further that $a_1\leq a_2$. If $a_1+a_2\leq a_4$, then $\mathrm{order}_I(\mathbf{u})=a_1+a_2=\mathrm{order}_I(\mathbf{u}x_4^t)$ for all $t\geq 0$, a contradiction. If $a_1+a_2> a_4$ and $a_2-a_1\geq a_4$, then $\mathrm{order}_I(\mathbf{u})=a_4+\min\{a_1,a_3\}=\mathrm{order}_I(\mathbf{u}x_2^t)$ for all $t>0$,  a contradiction again.

Finally, if  $a_2-a_1<a_4<a_1+a_2$, then $$\mathrm{order}_I(\mathbf{u})=\left\{
                                                                                          \begin{array}{ll}
                                                                                            a_4+\min\{a_1-k,a_3\}, & \hbox{$a_4-(a_2-a_1)=2k$;} \\
                                                                                            a_4+\min\{a_1-k-1,a_3\}, & \hbox{$a_4-(a_2-a_1)=2k+1$.}
                                                                                          \end{array}
                                                                                        \right.
$$
Suppose that $a_4-(a_2-a_1)=2k$.  This implies that if $a_1-k\leq a_3$, then $\mathrm{order}_I(\mathbf{u})=a_4+a_1-k=\mathrm{order}_I(\mathbf{u}x_3^t)$ for $t>0$, another contradiction. If $a_1-k> a_3$, then $\mathrm{order}_I(\mathbf{u})=a_4+a_3=\mathrm{order}_I(\mathbf{u}x_2^t)$ for $t>0$. This is also impossible. The case that $a_4-(a_2-a_1)=2k+1$ can be proved similarly.
\end{Example}

\begin{Example} \label{G_2} \em For $k\geq 2$, we let $\mathbb{E}_k$ denote the simple graph on vertex set $[k+2]$ and with edge set $\{\{k+1,k+2\},\{k+1,i\}, \{k+2,i\}: i=1,2,\cdots,k\}.$ Put $I:=I_{\mathbb{E}_k}$ for some $k\geq 2$. We claim that either  $\mathrm{order}_I(x^{\mathbf{a}})=\mathrm{order}_I(x_{k+1}^tx^{\mathbf{a}})$ or $\mathrm{order}_I(x^{\mathbf{a}})=\mathrm{order}_I(x_{k+2}^tx^{\mathbf{a}})$ for all $\mathbf{a}\in \mathbb{N}^{k+2}$ and all $t\geq 0$. In fact, if $\mathbf{a}=(a_1,\ldots,a_{k+2})$, then we may write $x^{\mathbf{a}}$ as $$x^{\mathbf{a}}=\prod_{i=1}^k(x_{k+1}x_{k+2}x_i)^{b_i}\prod_{1\leq i<j\leq k}(x_ix_j)^{c_{i,j}}N,$$ where \begin{equation} \label{1}\sum_{i=1}^k b_i\leq \min\{a_{k+1},a_{k+2}\},\qquad
b_i+\sum_{j\neq i}c_{i,j}\leq a_i, \forall i=1,\ldots,k,
\end{equation}.

Since $I=(x_{k+1}x_{k+2}x_i, x_ix_j\:\; 1\leq i<j\leq k)$, we have $$\mathrm{order}_I(x^{\mathbf{a}})=\max\{\sum_{i=1}^k b_i+\sum_{1\leq i<j\leq k}c_{i,j}\:\, b_i, c_{i,j} \mbox{ meet the inqualities  in (\ref{1})} \}.$$  Consequently,   if $a_{k+1}\leq a_{k+2}$, then $\mathrm{order}_I(x^{\mathbf{a}})=\mathrm{order}_I((x_{k+2})^tx^{\mathbf{a}})$ for all $t\geq 1$, as claimed. Hence $a_0(S/I^n)=-\infty$.
\end{Example}

The following notion  is also necessary  in Theorem~\ref{3n-2}.
\begin{Notation}\em \label{F_k} Let $k\geq 1$. We use $\mathbb{F}_k$ to denote the graph whose vertex set is $[3+k]$, and whose edge set is $\{\{i, j\}|i\in [k], j\in\{k+1,k+2,k+3\}\}\cup \{\{k+1,k+2\}\}.$ It is easy to see that $\mathbb{F}_1$ is isomorphic to the graph $\mathbb{G}_1$ in Figure~\ref{g4}.

\end{Notation}

\begin{Remark} \em  \label{part} Let $I$ be a monomial ideal of $S$ and $V$ a subset of $[s]$. We have defined $I_V$  to be the monomial ideal whose minimal generators are minimal generators of $I$ with their supports $\subseteq V$. In some cases, we regard $I_V$ as an ideals of $K[x_i\:\; i\in V]$. It is easy to see if $\mathbf{u}$ is a $0$-th critical monomial of $I$ with $\mathrm{supp}(\mathbf{u})\subseteq V$, then $\mathbf{u}$ is also a $0$-th critical monomial of $I_V$.
\end{Remark}

We are now ready to present a characterization of  an one-dimensional simplicial complex $\Delta$ such that  $a_0(S/I_{\Delta}^n)=3n-2$ for $n\geq 3$.

\begin{Theorem}\label{3n-2}   Let $\Delta$ be an one-dimensional simplicial complex  with $\mathrm{girth}(\Delta)=3$ and $|V(\Delta)|=s\geq 4$. Let $n\geq 3$.  Then  $a_0(S/I_{\Delta}^n)=3n-2$ if and only if  $\mathrm{clique}(\Delta)= 3$  and  $\Delta$ contains an induced subgraph which is isomorphic to one of the following graphs: $\mathbb{G}'_1$, $\mathbb{F}_k:k=2,\ldots,n-1$, where $\mathbb{G}'_1:=\mathbb{G}_1-\{3,4\}$. Note  the definition of $\mathbb{F}_k$ is given in Notation~\ref{F_k}.
  \end{Theorem}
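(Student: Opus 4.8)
The plan is to reduce the statement to the existence of a single critical monomial and then treat the two implications separately. First I would invoke Proposition~\ref{3.1}: one always has $a_0(S/I_\Delta^n)\le 3n-1$, with equality exactly when $\mathrm{clique}(\Delta)\ge 4$. Since $\mathrm{girth}(\Delta)=3$ forces $\mathrm{clique}(\Delta)\ge 3$, the value $3n-2$ can occur only when $\mathrm{clique}(\Delta)=3$, i.e.\ $\Delta$ has no induced $\mathbb{K}_4$; under this hypothesis $a_0(S/I_\Delta^n)\le 3n-2$, so the theorem reduces to: a $0$-th critical monomial of $I_\Delta^n$ of degree $3n-2$ exists iff $\Delta$ contains an induced copy of $\mathbb{G}'_1$ or of some $\mathbb{F}_k$ with $2\le k\le n-1$. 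Throughout I would use that any $0$-th critical monomial $\mathbf{u}$ satisfies $\deg_i(\mathbf{u})\le n-1$ (Remark~\ref{degree}) and that, by \cite[Lemma~3.1]{MV}, the combination $\deg(\mathbf{u})=3n-2\ge 3(n-1)$ with these bounds forces $\mathrm{order}_{I_\Delta}(\mathbf{u})=n-1$.

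For the sufficiency (``if'') direction I would exhibit explicit critical monomials. Writing the triangle of $\mathbb{G}'_1$ as $\{1,2,3\}$ and its non-adjacent vertex as $4$, I claim $\mathbf{u}=(x_1x_2x_3)^{n-1}x_4$ works; for $\mathbb{F}_k$, in the notation of Notation~\ref{F_k}, I claim $\mathbf{u}=x_1^{n-k}x_2\cdots x_k\,x_{k+1}^{n-1}x_{k+2}^{n-1}x_{k+3}$ works. In both cases $\deg\mathbf{u}=3n-2$, and the requirement $n-k\ge 1$ is precisely why the family stops at $k=n-1$; note also that $\mathbb{F}_1\cong\mathbb{G}_1$ has $a_0=-\infty$ by Example~\ref{G_1}, which is why $\mathbb{G}'_1$ replaces $\mathbb{F}_1$. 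For each such $\mathbf{u}$ I would verify $\mathrm{order}_{I_\Delta}(\mathbf{u})=n-1$ by directly counting how the triangles $x_ix_{k+1}x_{k+2}$, the non-edges $x_{k+1}x_{k+3},x_{k+2}x_{k+3}$, and the non-edges among the independent set $\{1,\dots,k\}$ can be combined, and then check $x_w\mathbf{u}\in I_\Delta^n$ for every vertex $w$. For $w$ inside the basic graph this is a within-$H$ computation; for an arbitrary vertex $w$ of $\Delta$ the decisive point is the no-$\mathbb{K}_4$ hypothesis: since $\{1,k+1,k+2\}$ (resp.\ $\{1,2,3\}$) is a triangle, $w$ cannot be adjacent to all three vertices, so a quadratic generator $x_wx_?$ is available to absorb $x_w$, and the resulting drop in order is recovered by one of the ``slack'' non-edges involving the extra vertex $k+3$ (resp.\ $4$). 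This produces a $0$-th critical monomial of $I_\Delta^n$ of degree $3n-2$, whence $a_0(S/I_\Delta^n)=3n-2$.

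For the necessity (``only if'') direction I would start from a $0$-th extremal monomial $\mathbf{u}$ with $\deg\mathbf{u}=3n-2$, set $V=\mathrm{supp}(\mathbf{u})$, and by Remark~\ref{part} regard $\mathbf{u}$ as a $0$-th critical monomial of $I_{\Delta_V}^n$ on the induced subgraph $\Delta_V$, which still has $\mathrm{clique}\le 3$. I would first show $\Delta_V$ must contain a triangle: otherwise $I_{\Delta_V}$ is a pure edge ideal, so by Proposition~\ref{squarefree} the squarefree leftover $N$ of a good decomposition $\mathbf{u}=MN$ has support an independent set of the complement, that is a clique of $\Delta_V$, giving $|N|\le 2$ and $\deg\mathbf{u}\le 2(n-1)+2=2n<3n-2$. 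Fixing a triangle and writing $\deg N=n-a$, where $a$ is the number of triangular generators in $M$, I would then run a case analysis driven by criticality (every $x_w\mathbf{u}\in I_{\Delta_V}^n$) and the absence of $\mathbb{K}_4$ to pin down the adjacencies inside $V$: the leftover $N$ together with the way the triangles of $M$ share vertices yields either a single triangle with a vertex non-adjacent to it (an induced $\mathbb{G}'_1$) or a book of triangles through a common edge $\{k+1,k+2\}$ with a further vertex adjacent to the independent apexes but not to that edge (an induced $\mathbb{F}_k$), the bound $k\le n-1$ coming from $\deg_i\le n-1$.

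I expect this last combinatorial extraction to be the main obstacle: showing that criticality of a degree-$(3n-2)$ monomial rigidly forces exactly one of these two configurations, and correctly tracking the index $k$, will require the most careful bookkeeping of adjacencies. I would finally remark that the graphs $\mathbb{F}_k$ are nested as induced subgraphs ($\mathbb{F}_{k'}\subseteq\mathbb{F}_k$ for $k'\le k$), so that containment of any one of them is what matters; the full list $\mathbb{F}_2,\dots,\mathbb{F}_{n-1}$ is retained because the necessity analysis naturally outputs the specific $k$ determined by $\mathbf{u}$ and because it displays the entire family of basic graphs for $3n-2$.
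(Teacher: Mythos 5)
Your reduction and your ``if'' direction are essentially the paper's own proof: under $\mathrm{clique}(\Delta)=3$, Proposition~\ref{3.1} caps $a_0(S/I_\Delta^n)$ at $3n-2$, and your witnesses $(x_1x_2x_3)^{n-1}x_4$ and $x_1^{n-k}x_2\cdots x_k\,x_{k+1}^{n-1}x_{k+2}^{n-1}x_{k+3}$ are exactly (a special case of) the monomials the paper uses, with the same verification scheme. One cosmetic difference: for a vertex $w$ outside the basic graph the paper absorbs $x_w$ with a cubic generator such as $x_wx_{k+1}x_1\in I_\Delta$, which is available because $\Delta$ is one-dimensional, so \emph{every} triple of distinct vertices is a nonface; this makes your appeal to the no-$\mathbb{K}_4$ hypothesis unnecessary at that step (your quadratic-generator route also works, but needs slightly more care to rebuild the remaining factor in $I_\Delta^{n-1}$).

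The genuine gap is in the ``only if'' direction, and you flag it yourself: the assertion that criticality of a degree-$(3n-2)$ monomial ``rigidly forces'' an induced $\mathbb{G}'_1$ or $\mathbb{F}_k$ is a statement of the hard half of the theorem, not an argument for it, and two essential ingredients are missing from your sketch. First, a workable stratification: the paper sets $\delta=\#\{i:\deg_i(\mathbf{u})=n-1\}\in\{0,1,2\}$ and treats each value separately, with the contradictions in the cases $\delta\le 1$ (and inside $\delta=2$) obtained by order-stabilization arguments of the form $\mathrm{order}_I(x_1^k\mathbf{u})=\mathrm{order}_I(\mathbf{u})$ for all $k$ --- e.g.\ when $1$ is adjacent to all of $X=\mathrm{supp}(\mathbf{u})\setminus\{1\}$, every generator $e$ of $I_{\Delta_{\mathbf{u}}}$ has $\deg_X(e)\ge 2$ while $\deg_X(x_1^k\mathbf{u})=2n-1$, so the order never reaches $n$; no mechanism of this kind appears in your plan. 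Second, and more seriously, your dichotomy is false as stated unless competing support graphs are excluded: a priori the support of $\mathbf{u}$ could induce the book graph $\mathbb{E}_3$ or the graph $\mathbb{G}_1$, neither of which contains an induced $\mathbb{G}'_1$ or $\mathbb{F}_k$, and the degree constraints ($\deg_i\le n-1$, total $3n-2$) are numerically compatible with an $\mathbb{E}_3$-support. The paper eliminates these by proving outright (Examples~\ref{G_1} and~\ref{G_2}, combined with Remark~\ref{part}) that the ideals of $\mathbb{G}_1$ and of every $\mathbb{E}_k$ admit \emph{no} $0$-th critical monomials whatsoever --- nontrivial explicit computations of $\mathrm{order}_I$ --- and it invokes these exclusions in all three $\delta$-cases as well as for the preliminary bound $|\mathrm{supp}(\mathbf{u})|\ge 5$. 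Your proposal has no counterpart for them, so the ``combinatorial extraction'' you defer would stall precisely there. (Your opening reduction, the triangle-existence step via Proposition~\ref{squarefree} together with the observation that $N\notin I$ in a good decomposition, the forced $\mathrm{order}_I(\mathbf{u})=n-1$, the bound $k\le n-1$, and the nesting remark $\mathbb{F}_{k'}\subseteq\mathbb{F}_k$ for $k'\le k$ are all correct.)
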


\begin{proof} Denote $I_{\Delta}$ by $I$. Assume that $\mathrm{clique}(\Delta)= 3$.  If $\Delta$ contains $\mathbb{G}'_1$ as its  induced subgraph, then it is not difficult to show that $\mathbf{u}:=x_1^{n-1}x_2^{n-1}x_3^{n-1}x_4$ is a $0$-th critical monomial of $I^n$, and so $a_0(S/I_{\Delta}^n)=3n-2$ by Proposition~\ref{3.1}. We now assume that  $\mathbb{F}_k$ is an induced subgraph of $\Delta$  for some $k\in \{2,\ldots, n-1\}$.  we claim $$\mathbf{u}:=x_1^{a_1}x_2^{a_2}\cdots x_k^{a_k} x_{k+1}^{n-1}x_{k+2}^{n-1}x_{k+3}$$ is a $0$-th critical monomial of $I^n$, where $a_1+\cdots+a_k=n-1$ and $a_i\geq 1$ for each $i$. First, we show that $\mathrm{order}_I(\mathbf{u})\leq n-1$. In fact, if $\mathrm{order}_I(\mathbf{u})\geq n$, then we may write $\mathbf{u}$ as $\mathbf{u}=e_1\cdots e_nN$, where each $e_i$ is a minimal generator of $I$. Put $V:=\{1,2,\ldots,k\}\cup \{k+3\}$. Then $\deg_V(e_i)\geq 1$ for $i=1,\ldots,n$. From this it follows that $\deg_V(e_i)= 1$ for $i=1,\ldots,n$, since $\deg_V(\mathbf{u})=n$. Hence $e_i\in \{x_{k+1}x_{k+3}, x_{k+2}x_{k+3}, x_{k+1}x_{k+2}x_{j}, j\in [k]\}$. Note that $\deg_{k+3}(\mathbf{u})=1$,  we have either $\deg_{k+1}(\mathbf{u})=n$ or $\deg_{k+2}(\mathbf{u})=n$, a contradiction. This proves that $\mathrm{order}_I(\mathbf{u})\leq n-1$.

Next, we show that $x_j\mathbf{u}\in I^n$ for all $j\in [s]$. If $j=1$ then we may  write $x_j\mathbf{u}=(x_1x_2) \prod_{i=1}^{n-1}(x_{k+1}x_{k+2}x_{k_i})$ such that each $k_i\in V$, and so  $x_j\mathbf{u}\in I^n$. The proofs of the case that $j\in [k]$ are all the same. If $j\in \{k+1,k+2\}$, say $j=k+1$, then  $x_j\mathbf{u}=(x_{k+1}x_{k+3})\prod_{i=1}^{n-1}(x_{k+1}x_{k+2}x_{k_i})$ with each $k_i\in [k]$, and so  $x_j\mathbf{u}\in I^n$. If $j=k+3$, then
$x_j\mathbf{u}=(x_{k+1}x_{k+3})(x_{k+2}x_{k+3})x_1\prod_{i=1}^{n-2}(x_{k+1}x_{k+2}x_{k_i})$ with each $k_i\in [k]$ and so $x_j\mathbf{u}\in I^n$.
 If $j\notin [k+3]$, then, since  $x_jx_{k+1}x_1\in I$, we may write $x_j\mathbf{u}=(x_jx_{k+1}x_1)(x_{k+2}x_{k+3})\prod_{i=1}^{n-2}(x_{k+1}x_{k+2}x_{k_i})$ such that each $k_i\in [k]$, and so $x_j\mathbf{u}\in I^n$. Thus, we have shown $x_j\mathbf{u}\in I^n$ for all $j$. From this it follows that $\mathbf{u}$ is a $0$-th critical monomial of $I^n$   and so $a_0(S/I^n)=3n-2$.

Conversely, suppose  that $a_0(S/I^n)=3n-2$. It is clear that from Proposition~\ref{3.1} that $\mathrm{clique}(\Delta)= 3$. We now assume on the contrary that $\Delta$ contains no   induced subgraphs isomorphic to one of graphs $\{\mathbb{G}'_1,  \mathbb{F}_k, k=2,\ldots, n-1\}$.  We need to obtain a contradiction. To this end, let $\mathbf{u}$ be a $0$-th extremal monomial of $I^n$.  Then  $|\mathrm{supp}(\mathbf{u})|\geq 5$ by Examples~\ref{G_1} and \ref{G_2}. Let $\Delta_{\mathbf{u}}$   denote the subgraph of $\Delta$ induced on $\mathrm{supp}(\mathbf{u})$. It is clear that $I_{\Delta_{\mathbf{u}}}$ is nothing but the ideal $I_{\mathrm{supp}(\mathbf{u})}$.

 Set  $\delta=|\{i\in [s]: \deg_i(\mathbf{u})=n-1\}|$. Then $0\leq \delta\leq 2$. We consider the following cases:

{\it Case 1:}  The case that $\delta=0$.    Note that if the subgraph $\Delta_{\mathbf{u}}$ contains  two disjoint non-edges, (where a pair $\{i,j\}$ with $i\neq j$ is called a non-edge if $\{i,j\}$ is not an edge), say $\{1,2\}$ and $\{3,4\}$, then $\mathbf{u}=(x_1x_2)(x_3x_4)\mathbf{v}$ with $\deg(\mathbf{v})=3(n-2)$ and $\deg_i(\mathbf{v})\leq n-2$ for $i=1,2$ and so $\mathbf{u}\in I^n$, a contradiction. Hence,  any pair of  non-edges in $\Delta_{\mathbf{u}}$ intersects if they  exist. From this together with the fact $\mathrm{clique}(\Delta)=3$,  $\Delta_{\mathbf{u}}$ has to be  isomorphic to $\mathbb{E}_3$. This is also impossible by Example~\ref{G_2} and Remark~\ref{part}.

{\it Case 2:} The case that $\delta=1$. We may assume that $\deg_1(\mathbf{u})=n-1$. Put $X:=\mathrm{supp}(\mathbf{u})\setminus \{1\}$ and $N_X(1):=\{j\in X\:\, \{i,j\}\in E(\Delta)\}$. If $N_X(1)\subsetneq X$, then we can obtain a contradiction by using a similar argument as in the first case. Hence $N_X(1)=V$. This implies $\deg_X(e)\geq 2$ for any minimal generator $e$ of $I_{\Delta_{\mathbf{u}}}$. Since $\deg_X(x_1^k\mathbf{u})=2n-1$,  it follows that $\mathrm{order}_{I}(x_1^k\mathbf{u})\leq n-1$ for all $k\geq 1$, a contradiction again.

{\it Case 3:} The case that $\delta=2$. We may assume that $\deg_1(\mathbf{u})=\deg_2(\mathbf{u})=n-1$. Suppose first that $\{1,2\}\notin E(\Delta)$. We claim that  the  subgraph of $\Delta$ induced  on $Y:=\mathrm{supp}(\mathbf{u})\setminus \{1,2\}$ is a complete graph.  In fact, if there exist vertices in $Y$, say $3,4$, such that $\{3,4\}\notin E(\Delta)$, then  $\mathbf{u}=(x_1x_2)(x_3x_4)x_1^{n-2}x_2^{n-2}\mathbf{v}$ belongs to $I^n$, by \cite[Lemma 3.1]{MV}, a contradiction. This proves our claim.  From this together with the fact $\mathrm{clique}(\Delta)=3$ it follows that   $Y$ contains 3 vertices, and $Y\setminus N(i)$ is not empty for $i=1,2$. Here $N(i):=\{j\in [s]\:\, \{i,j\}\in E(\Delta)\}$.  Note that if
$|(Y\setminus N(1))\cup (Y\setminus N(2))|\geq 2$  then $\mathbf{u}\in I^n$, hence it contains exactly one vertex. This implies the  subgraph of $\Delta$ induced  on $\mathrm{supp}(\mathbf{u})$ is isomorphic to $\mathbb{E}_3$, which is impossible, by Example~\ref{G_2} and Remark~\ref{part}.

   Suppose next that $\{1,2\}$ is an edge of $\Delta$. We may write $\mathbf{u}=x_1^{n-1}x_2^{n-1}\mathbf{v}$ and denote by $Z$ the support of $\mathbf{v}$. We claim that if $1$ is adjacent to every vertex $Z$ then $\mathrm{order}_I(x_1^k\mathbf{u})=n-1$ for any $k\geq 1$. In fact, if  $\mathrm{order}_I(x_1^k\mathbf{u})\geq n$ for some $k\geq 1$, then we may write $x_1^k\mathbf{u}=e_1e_2\cdots e_nN$, where each $e_i$ is a minimal generator of $I$. It is not difficult to see that $\deg_{Z}(e_i)=1$ for $i=1,\ldots,n$, and so $e_i$ is either $x_1x_2x_{k_i}$ or $x_2x_{k_i}$ for some $k_i\in Z$ for all $i$. This implies $\deg_2(x_1^k\mathbf{u})\geq n$,  a contradiction. Hence $Z\setminus N(1)$ is not empty. Similarly, $Z\setminus N(2)$ is also not empty. Next, we show that $Z\setminus N(1)=Z\setminus N(2)$ is a singleton. In fact, if this is not true, then  there exist $i\neq j\in Z$ such that $i$ is not adjacent to  1 and $j$ is not adjacent to 2. This implies that $\mathbf{u}=(x_1x_i)(x_2x_j) x_1^{n-2}x_2^{n-2}\mathbf{v}'$ belongs to $I^n$, a contradiction. (Here we use the easy fact that $x_ix_jx_k\in I$ if $i,j,k$ are pairwise distinct.)
 Hence we may assume that $Z\setminus N(1)=Z\setminus N(2)=\{3\}$.

 Set $U:=Z\setminus \{3\}$. Then $U$ contains at least two vertices. Since $\mathrm{clique}(\Delta)=3$, every pair of vertices of $U$ are not adjacent, i.e.,  the induced graph of  $\Delta$ on $U$ is an empty graph. (Recall a graph is an {\it empty graph} if its edge set is empty.) On the other hand,  by the assumption that  $\Delta$ contains no induced subgraph isomorphic to $\mathbb{G}'_1$, we have $3$ is adjacent to every vertex in $U$. Hence $\Delta_{\mathbf{u}}$ is isomorphic to $\mathbb{F}_k$ with $k=|U|$. This is also  impossible by our assumption.
\end{proof}

  For $f(n)=3n-3$, we can  present two of its basic graphs. (See the end of Introduction for the definition of basic graph.)
\begin{Proposition}\label{3n-3} If $\Delta$ contains an induced subgraph isomorphic to one of the graph in Figure~\ref{g3}, then $a_0(S/I_{\Delta}^n)= 3n-3$ for $n\geq 3$.

\begin{figure}[ht!]

\begin{tikzpicture}[line cap=round,line join=round,>=triangle 45,x=1.5cm,y=1.5cm]

\draw (7,1)-- (5,1)--(6,2)--(7,1);
\draw (6,2)--(6,3);

\draw (6,3) node[anchor=south east]{4};

\draw (6,2) node[anchor=south east]{3};

\draw (5,1) node[anchor=south east]{2};
\draw (7.3,0.9) node[anchor=south east]{1};

\draw (7.2,2) node[anchor=south east]{5};
\draw (7,2)--(7,1);

\fill [color=black] (6,3) circle (1.5pt);
\fill [color=black] (7,1) circle (1.5pt);\fill [color=black] (6,2) circle (1.5pt);
\fill [color=black] (5,1) circle (1.5pt);

\fill [color=black] (7,2) circle (1.5pt);

\draw (11.7,0.5) node[anchor=north east]{$\mathbb{G}_7$};

\draw (12,1)-- (10,1)--(11,2)--(12,1);
\draw (11,2)--(11,3);

\draw (11,3) node[anchor=south east]{4};

\draw (11,2) node[anchor=south east]{3};

\draw (10,1) node[anchor=south east]{2};
\draw (12.3,1) node[anchor=south east]{1};

\draw (6.3,0.5) node[anchor=north east]{$\mathbb{G}_6$};

\draw (12.3,2) node[anchor=south east]{5};
\fill [color=black] (12,2) circle (1.5pt);

\fill [color=black] (11,3) circle (1.5pt);
\fill [color=black] (12,1) circle (1.5pt);\fill [color=black] (11,2) circle (1.5pt);
\fill [color=black] (10,1) circle (1.5pt);

\draw (12,1)--(12,2)--(11,3);

\end{tikzpicture}
\caption{}\label{g3}
\end{figure}
\end{Proposition}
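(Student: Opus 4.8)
The plan is to establish the two equalities $a_0(S/I_{\mathbb{G}_6}^n)=a_0(S/I_{\mathbb{G}_7}^n)=3n-3$ directly (which is exactly the assertion that $\mathbb{G}_6,\mathbb{G}_7$ are basic graphs for $3n-3$): a single explicit $0$-th critical monomial of degree $3n-3$ will give the lower bound, and Proposition~\ref{3.1} together with Theorem~\ref{3n-2} will give the matching upper bound. Label the vertices as in Figure~\ref{g3}, so that $\{1,2,3\}$ spans the triangle, $4$ is the neighbour of $3$, and $5$ is the neighbour of $1$; thus $\mathbb{G}_7=\mathbb{G}_6+\{4,5\}$. Writing $I=I_\Delta$ for $\Delta\in\{\mathbb{G}_6,\mathbb{G}_7\}$, the generators are $x_1x_2x_3$ together with the quadratic generators coming from the non-edges, and since $\{1,2,3\}$ is a clique, \emph{every} quadratic generator meets $\{4,5\}$. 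The candidate I propose is
\[
\mathbf{u}=(x_1x_2x_3)^{\,n-2}\,x_2x_4x_5=x_1^{\,n-2}x_2^{\,n-1}x_3^{\,n-2}x_4x_5,
\]
of degree $3n-3$, which I will treat uniformly for both graphs.

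First I would show $\mathrm{order}_I(\mathbf{u})=n-1$, so that $\mathbf{u}\notin I^n$. The bound $\mathrm{order}_I(\mathbf{u})\ge n-1$ is witnessed by $(x_1x_2x_3)^{n-2}(x_2x_4)$. For the reverse, consider any factorization of a divisor of $\mathbf{u}$ into generators and let $t$ be the number of triangle factors $x_1x_2x_3$; then $t\le \deg_1(\mathbf{u})=n-2$. Since every quadratic generator uses $x_4$ or $x_5$ and $\deg_4(\mathbf{u})=\deg_5(\mathbf{u})=1$, at most $\deg_4(\mathbf{u})+\deg_5(\mathbf{u})=2$ quadratic factors can occur, so the total is at most $t+2$. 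The value $n$ could only arise from $t=n-2$ with two quadratic factors; but $t=n-2$ exhausts $x_1,x_3$ and leaves only one unit of $x_2$, forcing the two quadratic factors into $\{x_2x_4,x_2x_5,x_4x_5\}$ while respecting $\deg_2\le 1$, $\deg_4\le 1$, $\deg_5\le 1$, which is impossible. Hence $\mathrm{order}_I(\mathbf{u})=n-1$.

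Next I would verify $\sqrt{I^n:\mathbf{u}}=\mathfrak{m}$ by writing each $x_i\mathbf{u}$ as a product of $n$ generators, choosing packings that avoid the generator $x_4x_5$ so the same computation is valid for $\mathbb{G}_6$ and $\mathbb{G}_7$ at once. Explicitly,
\[
x_1\mathbf{u}=(x_1x_2x_3)^{n-2}(x_1x_4)(x_2x_5),\qquad x_2\mathbf{u}=(x_1x_2x_3)^{n-2}(x_2x_4)(x_2x_5),
\]
while $x_4\mathbf{u}$ is divisible by $(x_1x_2x_3)^{n-3}(x_1x_4)(x_2x_4)(x_3x_5)$ (here $n\ge 3$ is used), and the remaining cases $x_3\mathbf{u}$, $x_5\mathbf{u}$ follow from the automorphism $(1\,3)(4\,5)$, which fixes both graphs and fixes $\mathbf{u}$. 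Each right-hand side is a product of $n$ generators dividing $x_i\mathbf{u}$, so $x_i\in\sqrt{I^n:\mathbf{u}}$ for all $i\in[5]$; by Lemma~\ref{fun} this makes $\mathbf{u}$ a $0$-th critical monomial, giving $a_0(S/I^n)\ge 3n-3$.

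For the upper bound, note $\mathrm{clique}(\mathbb{G}_6)=\mathrm{clique}(\mathbb{G}_7)=3$, so $a_0\neq 3n-1$ by Proposition~\ref{3.1}, whence $a_0\le 3n-2$; moreover neither graph contains an induced $\mathbb{G}'_1$ (no vertex is non-adjacent to the whole triangle) nor an induced $\mathbb{F}_k$ (for $k\ge 3$ it has more than five vertices, while $\mathbb{F}_2$ has seven edges and our graphs have five and six), so $a_0\neq 3n-2$ by Theorem~\ref{3n-2}, and therefore $a_0\le 3n-3$. Combining the two bounds yields $a_0(S/I^n)=3n-3$ for $n\ge 3$. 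The main obstacle is the integrality in the order computation: a fractional relaxation is too weak, since assigning weight $\tfrac12$ to every variable only gives $\mathrm{order}_I(\mathbf{u})\le \tfrac32(n-1)$, so the exact value $n-1$ must be extracted combinatorially through the ``$t=n-2$ cannot be combined with two quadratic factors'' analysis; a secondary point requiring care is arranging every criticality packing to avoid $x_4x_5$, which is what lets $\mathbb{G}_6$ and $\mathbb{G}_7$ be handled simultaneously.
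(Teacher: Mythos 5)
Your proposal is correct and takes essentially the same route as the paper: the paper's proof uses the very same critical monomial $\mathbf{u}=x_1^{n-2}x_2^{n-1}x_3^{n-2}x_4x_5$ (dismissing criticality as ``not difficult to check,'' which is exactly the order computation and the $x_i\mathbf{u}\in I^n$ verifications you supply, with the automorphism $(1\,3)(4\,5)$ as a nice economy) and obtains the matching upper bound by the same appeal to Proposition~\ref{3.1} and Theorem~\ref{3n-2}. Your reading of the statement as the assertion that $\mathbb{G}_6$ and $\mathbb{G}_7$ are basic graphs for $3n-3$ is the right one, since the paper's own invocation of Theorem~\ref{3n-2} tacitly presupposes precisely the clique-number and forbidden-subgraph ($\mathbb{G}'_1$, $\mathbb{F}_k$) checks that you carry out explicitly for the two graphs.
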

\begin{proof} Set $\mathbf{u}=x_1^{n-2}x_2^{n-1}x_3^{n-2}x_4x_5$. Then it is not difficult to check that $\mathbf{u}$ is a $0$-th critical monomial of $I_{\Delta}^n$. This, together with Theorem~\ref{3n-2}, implies $a_0(S/I_{\Delta}^n)= 3n-3$ for $n\geq 3$
\end{proof}

We close this paper by giving a lower bound for $a_0(S/I_{\Delta}^n)$ when it is not $-\infty$.

\begin{Remark} \em  Let $\Delta$ be an one-dimensional simplicial complex and $n$ an integer $\geq 2$. If $a_0(S/I_{\Delta}^n)\neq -\infty$, then $a_0(S/I_{\Delta}^n)\geq n+1$.  
 
 In fact, since $a_0(S/I_{\Delta}^n)\neq -\infty$, we may assume $\mathbf{u}=x_1^{a_1}x_2^{a_2}\cdots x_s^{a_s}$ is a $0$-th critical monomial of $I_{\Delta}^n$. Assume further that $a_1\geq 1$. Then, since $x_1^k\mathbf{u}\in I^n$ for some $k>0$, we may write $x_1^k\mathbf{u}\in I^n$ as $$x_1^k\mathbf{\mathbf{u}}=e_1e_2\cdots e_nN, $$ where each $e_i$ is a minimal generator of $I$ and $N$ is monomial. Set $V:=[s]\setminus \{1\}$. Note that  $\deg_V(e_i)\geq 1$ for each $i$, we have $a_0(S/I_{\Delta}^n)\geq \deg(\mathbf{u})=a_1+\sum_{i=1}^n\deg_V(e_i)\geq n+1$.
\end{Remark}

{\bf \noindent Acknowledgment}
We thank the anonymous referee for his/her careful reading and useful comments.
This project is supported by NSFC (No. 11971338)

\end{document}